\documentclass[11pt]{article}

\usepackage[T1]{fontenc}
\usepackage{lmodern}
\usepackage[a4paper,margin=2.8cm]{geometry}
\usepackage{amsmath,amssymb,amsthm,mathtools,mathrsfs}
\usepackage[numbers,sort&compress]{natbib}
\usepackage[hidelinks]{hyperref}
\usepackage{microtype}

\hypersetup{
  pdftitle={On the Structure of Higher Zhu Induced Modules in Positive Characteristic},
  pdfauthor={Haihua Huang},
  pdfsubject={Higher Zhu algebras and induced modules}
}

\allowdisplaybreaks

\newtheorem{theorem}{Theorem}[section]
\newtheorem{lemma}[theorem]{Lemma}
\newtheorem{proposition}[theorem]{Proposition}
\newtheorem{corollary}[theorem]{Corollary}
\newcommand{\Ann}{\operatorname{Ann}}
\newcommand{\wt}{\operatorname{wt}}
\newcommand{\End}{\operatorname{End}}
\newcommand{\Hom}{\operatorname{Hom}}
\newcommand{\Res}{\operatorname{Res}}

\title{On the Structure of Higher Zhu Induced Modules in Positive Characteristic}
\author{Haihua Huang\thanks{Email: \texttt{hoiwaa@163.com};
ORCID: 0009-0008-7337-3947.}\\
\small School of General Education, Guangzhou Civil Aviation College,
Guangzhou, China}
\date{}

\begin{document}

\maketitle

\begin{abstract}
Let $F$ be an algebraically closed field of characteristic different from $2$,
and let $V$ be a nonnegatively integer-graded vertex operator algebra over $F$.
We consider higher Zhu algebras obtained by taking a quotient of $V$
by a relation space containing the vectors $D_V^{(h)}a-\binom{-\wt a}{h}a$
for all homogeneous $a\in V$ and integers $h\geq1$, where
$D_V^{(h)}a=a_{-h-1}\mathbf1$. For $n\geq1$ and a nonzero unital module $U$
over the corresponding higher Zhu algebra, we study the quotients $L_n(U)$
of induced modules and determine the kernel of the canonical map
$U\to\Omega_n(L_n(U))/\Omega_{n-1}(L_n(U))$.
This kernel is the largest submodule of $U$ that factors through the
adjacent lower-level algebra. When this submodule is zero, the canonical
map is an isomorphism, and each $\Omega_r$ is the direct sum of the first
$r+1$ homogeneous subspaces. These results are proved using an expansion of
zero modes with integer coefficients and the grading of the standard induced modules.
Moreover, the construction is natural with respect to module homomorphisms,
and $L_n(U)$ is indecomposable in the graded module category whenever $U$
is indecomposable. From nonsplit extensions of modules
for higher Zhu algebras satisfying the stated hypotheses, we construct
nonsplit short exact sequences of weak $V$-modules.
\end{abstract}

\noindent Keywords: higher Zhu algebras; positive characteristic; zero modes;
induced modules; nonsplit extensions

\noindent 2020 Mathematics Subject Classification: 17B69 (primary);
16D10, 17B10 (secondary)

\section{Introduction}

Higher Zhu algebras relate graded modules for vertex operator algebras to
modules for associative algebras. Zhu \citep{Zhu96} constructed the algebra
$A(V)$ over the complex numbers and used rational functions in arbitrarily
many variables to construct $V$-modules from $A(V)$-modules.
Dong, Li, and Mason \citep{DLM98} developed higher Zhu algebras and the
associated induction theory over the complex numbers.
Yang \citep{Yang15} gave another construction of the higher-level induction
functor using residue formulas for weak associativity.
Dong and Ren \citep{DR14} developed the representation theory of vertex
operator algebras in positive characteristic.
Ren \citep{Ren17} introduced the corresponding higher Zhu algebras and
induction construction. He \citep{He17} described higher Zhu algebras as
subquotients of universal enveloping algebras over the complex numbers.
Xu \citep{Xu26} established such a description over algebraically closed
fields of characteristic different from $2$. For the elements
$a\circ_{n,t}^{s}b$, we use the parameter range $0\leq s\leq t$ given by
Xu \citep{Xu26}.

In positive characteristic, we consider the vectors
$D_V^{(h)}a=a_{-h-1}\mathbf1$ for all homogeneous $a\in V$ and $h\geq1$.
On every weak module, their zero modes satisfy
$o(D_V^{(h)}a)=\binom{-\wt a}{h}o(a)$.
Griffin \citep[Section~4.2]{Griffin26} defined a quotient space using the
elements $a\circ_{n,t}^{0}b$ and the relations
$D_V^{(h)}a-\binom{-\wt a}{h}a$ for $h\geq1$, and stated its unital
associative algebra structure, the existence of induced modules, and the
isomorphism between their degree-$n$ subspace and $U$.
Under the common hypotheses, his relation space and quotient product agree
with those used here. We denote this algebra by $\widetilde A_n(V)$ and
study the quotient $L_n(U)$ of the induced module.
We use the cited associative algebra structure and zero-mode action on weak
modules, and give a detailed proof for the specific induced quotient defined
by weak associativity relations.
In characteristic zero, the relations involving $D_V^{(h)}a$ follow from
those of the form $(L(-1)+L(0))a$. In positive characteristic, they make the
degree-zero action compatible with all the derivative relations used to
define the Lie algebra $\widehat V$.
In the preliminaries, a finite-dimensional vertex operator algebra
illustrates the difference between the relation space obtained using only
$(L(-1)+L(0))a$ and the one obtained using all
$D_V^{(h)}a-\binom{-\wt a}{h}a$.

For $n\geq1$, write
$I_n=\ker(\widetilde A_n(V)\twoheadrightarrow\widetilde A_{n-1}(V))$.
For a nonzero unital $\widetilde A_n(V)$-module $U$, set
$\Ann_U(I_n)=\{u\in U\mid I_nu=0\}$.
We determine the kernel of the canonical map
$\iota_U:U\to\Omega_n(L_n(U))/\Omega_{n-1}(L_n(U))$:
\[
 \ker\iota_U=\Ann_U(I_n).
\]
The right-hand side is the largest submodule of $U$ that factors through
the adjacent lower-level algebra. We further prove that
\[
 \Ann_U(I_n)=0
 \quad\Longleftrightarrow\quad
 \Omega_r(L_n(U))=\bigoplus_{d=0}^rL_n(U)(d)
 \quad\text{for all }r\geq0.
\]
Under these equivalent conditions, $\iota_U$ is an isomorphism.
These conditions are also equivalent to requiring that every nonzero
submodule of $U$ does not factor through the adjacent lower-level algebra.
Over the complex numbers, Barron, Vander Werf, and Yang \citep{BVWY19}
formulated this submodule condition for the canonical map to be an
isomorphism when $U$ need not be simple.
Using an expansion with integer coefficients for the zero modes of all
$a\circ_{n,t}^{s}b$, we prove the exact kernel formula and these conclusions
about the grading for the quotient algebras and induced modules considered
here.

The zero-mode expansion expresses the action of each relation as a sum of
mode products. The two factors have degrees $k$ and $-k$, respectively, with
$k>n$; for each fixed vector, only finitely many terms act nontrivially.
For the corresponding inclusion over the complex numbers, see
Barron, Vander Werf, and Yang \citep{BVWY19}; related mode computations
appear in Ren \citep{Ren17}. The zero-mode expansion with integer
coefficients used here appears in Griffin
\citep[proof of Proposition~4.13]{Griffin26}.
We record this formula in our mode notation and verify the degrees of the
factors and the meaning of the sum on weak modules.
In the quotient $L_n(U)$, every nonzero homogeneous vector can be sent to a
nonzero vector of degree $n$ by an element of the enveloping algebra.
Together with the zero-mode formula, this property determines the kernel
of the canonical map.

Finally, we verify the naturality of the induction construction with
respect to module homomorphisms. Using the isomorphism between the
degree-$n$ subspace and $U$, together with generation by this subspace, we
obtain a canonical isomorphism of spaces of graded homomorphisms and prove
that $U$ is indecomposable if and only if $L_n(U)$ is indecomposable in the
graded module category.
For a nonsplit extension of simple modules, the induced homomorphism is
injective, and the degree-$n$ subspace of its quotient is isomorphic to the
quotient module in the original extension.
The resulting short exact sequence is nonsplit both in the graded module
category and in the category of weak modules after forgetting the grading.

Throughout the paper, $F$ is an algebraically closed field of characteristic
different from $2$, and $V$ is a nonnegatively integer-graded vertex operator
algebra over $F$. Weights and mode degrees always refer to the given integer
grading. Section~\ref{sec:preliminaries} specifies the relation spaces, modules, and mode actions.
Section~\ref{sec:zero-modes} records the zero-mode formula and the properties of the quotient
algebras. Section~\ref{sec:induced-construction} constructs the induced modules and their quotients and
establishes the properties needed for the subsequent structural results.
Section~\ref{sec:induced-structure} determines the kernel of the canonical map and studies the
grading, homomorphisms, and nonsplit extensions of the induced modules.

\section{Preliminaries}
\label{sec:preliminaries}

Under the standing assumption on the field, we use the definitions of
vertex operator algebras, weak modules, and admissible modules in
Dong and Ren \citep{DR14}.
Write a nonnegatively integer-graded vertex operator algebra as
$V=(V,Y,\mathbf1,\omega)=\bigoplus_{d\geq0}V_d$, where
$\mathbf1\in V_0$ is the vacuum vector, $\omega\in V_2$ is the conformal
vector, and each $V_d$ is a finite-dimensional $F$-vector space. The vertex
operator map is written as $Y(a,z)=\sum_{r\in\mathbb Z}a_rz^{-r-1}$.
For homogeneous $a\in V_d$ and $b\in V_e$, we have
$a_rb\in V_{d+e-r-1}$, and we write $\wt a=d$. The modes of the conformal
vector are denoted by $Y(\omega,z)=\sum_{r\in\mathbb Z}L(r)z^{-r-2}$, so
that $L(r)=\omega_{r+1}$. In particular, $L(0)$ acts on $V_d$ as the image
of the integer $d$ in $F$, and $L(-1)$ is the translation operator.
The notation $\wt a$ always denotes the integer degree $d$.

A weak $V$-module is an $F$-vector space $M$ equipped with a linear map
$Y_M:V\to(\End_F(M))[[z,z^{-1}]]$, written as
$Y_M(a,z)=\sum_{r\in\mathbb Z}a_rz^{-r-1}$. The action satisfies the
truncation condition $a_rw=0$ for fixed $a\in V$ and $w\in M$ and all
sufficiently large $r$, the vacuum property
$Y_M(\mathbf1,z)=\operatorname{Id}_M$, and the module Jacobi identity.
Writing $\delta(z)=\sum_{r\in\mathbb Z}z^r$ for the formal delta function,
the latter identity is
\[
 \begin{aligned}
 &z_0^{-1}\delta\!\left(\frac{z_1-z_2}{z_0}\right)
 Y_M(a,z_1)Y_M(b,z_2)
 -z_0^{-1}\delta\!\left(\frac{z_2-z_1}{-z_0}\right)
 Y_M(b,z_2)Y_M(a,z_1)\\
 &\qquad=
 z_2^{-1}\delta\!\left(\frac{z_1-z_0}{z_2}\right)
 Y_M(Y(a,z_0)b,z_2)
 \end{aligned}
\]
for all $a,b\in V$. The weak module axioms also give
$Y_M(L(-1)a,z)=\frac{d}{dz}Y_M(a,z)$, and we use this derivative property
below.

An admissible $V$-module is a weak module with a nonnegative integer grading
$M=\bigoplus_{d\geq0}M(d)$ such that
$a_rM(d)\subseteq M(d+\wt a-r-1)$. We set $M(k)=0$ for $k<0$.
We allow $M(0)=0$ and always use the given integer grading.
We call $M(d)$ the homogeneous subspace of degree $d$. For a graded
submodule $N\subseteq M$, write $N(d)=N\cap M(d)$.
Graded module homomorphisms preserve the given integer gradings.
All indecomposability statements refer to graded direct sum decompositions.
Conclusions about weak modules after forgetting the grading are specified
explicitly where they occur.

For homogeneous $a\in V$, define its zero mode on $M$ by
$o(a)=a_{\wt a-1}\in\End_F(M)$, and extend $o$ linearly to all of $V$
using the finite homogeneous decomposition of each element.

For a formal Laurent series $f(z)=\sum_jc_jz^j$ with coefficients in a
given vector space, write $[z^q]f(z)=c_q$ and set
$\Res_z f(z)=[z^{-1}]f(z)$. A multivariable residue means the coefficient
of degree $-1$ in each variable in the specified iterated expansion.
For an integer $r$ and a nonnegative integer $h$, we use the generalized
binomial coefficient $\binom rh=[z^h](1+z)^r$, computed first over the
integers and then mapped to $F$.

For a weak $V$-module $M$ and a nonnegative integer $r$, set
\[
 \Omega_r(M)=
 \{w\in M\mid a_jw=0\text{ for all homogeneous $a\in V$ and $j\geq\wt a+r$}\}.
\]
Equivalently, every homogeneous mode of degree strictly less than $-r$
annihilates $\Omega_r(M)$. If $M=\bigoplus_{d\geq0}M(d)$ is admissible,
then $\Omega_r(M)$ is a graded subspace. Indeed, if
$w=\sum_dw_d\in\Omega_r(M)$ and $a_j$ is a homogeneous mode of degree
less than $-r$, the vectors $a_jw_d$ belong to distinct homogeneous
subspaces. The equality $a_jw=0$ and the direct sum grading imply
$a_jw_d=0$ for every $d$. Hence each $w_d$ belongs to $\Omega_r(M)$.

For $h\geq0$, define the operator $D_V^{(h)}a=a_{-h-1}\mathbf1$.
Thus $D_V^{(0)}=\operatorname{id}_V$ and $D_V^{(1)}=L(-1)$, and
$D_V^{(h)}$ has degree $h$.
We use the residue expression of Ren \citep{Ren17}, with the parameter
range $0\leq s\leq t$ used by Xu \citep{Xu26}.
For $n\geq0$, homogeneous $a\in V$, and $b\in V$, set
\begin{equation}
\label{eq:ren-circle}
 a\circ_{n,t}^{s}b
 =\Res_zY(a,z)b\frac{(1+z)^{\wt a+n+s}}{z^{2n+2+t}},
 \qquad 0\leq s\leq t.
\end{equation}
Let $C_n$ be the linear span of these elements, and set
\[
 \mathcal T=\operatorname{span}_F
 \left\{D_V^{(h)}a-\binom{-\wt a}{h}a:
 a\in V\text{ homogeneous},\ h\geq1\right\}.
\]
Define the relation space $\widetilde O_n(V)=C_n+\mathcal T$ and the
quotient vector space $\widetilde A_n(V)=V/\widetilde O_n(V)$.
The binomial expansion $(1+z)^s=\sum_{j=0}^s\binom{s}{j}z^j$ gives
\[
 a\circ_{n,t}^{s}b
 =\sum_{j=0}^{s}\binom{s}{j}a\circ_{n,t-j}^{0}b,
 \qquad 0\leq s\leq t.
\]
Since $t-j\geq0$, the space $C_n$ is also spanned by the elements
$a\circ_{n,t}^{0}b$ for $t\geq0$.
Griffin \citep[Section~4.2]{Griffin26} gives this identity and defines
the relation space using $a\circ_{n,t}^{0}b$ and the above spanning
elements of $\mathcal T$. Under our hypotheses, the two descriptions give
the same relation space.
Define a bilinear product on $V$ by
\begin{equation}
\label{eq:higher-product}
 a*_nb=\sum_{r=0}^{n}(-1)^r\binom{n+r}{n}
 \Res_zY(a,z)b\frac{(1+z)^{\wt a+n}}{z^{n+r+1}},
\end{equation}
first for homogeneous $a$ and then by linear extension using finite
homogeneous decompositions.
The algebra structure on the quotient and its zero-mode action are stated
in Proposition~\ref{prop:modified-zhu-algebra}.

With the above convention on nonnegative parameters, the relation space
used by Ren \citep{Ren17} is
\[
 \begin{aligned}
 O_n(V)={}&\operatorname{span}_F
 \{a\circ_{n,t}^{s}b\mid a\in V\text{ homogeneous},\ b\in V,\ 0\leq s\leq t\}\\
 &+\operatorname{span}_F\{(L(-1)+L(0))a\mid a\in V\}\\
 ={}&C_n+(L(-1)+L(0))V.
 \end{aligned}
\]
Hence $O_n(V)\subseteq\widetilde O_n(V)$.
For homogeneous $a\in V$ and $h\geq1$, the identities
$L(-1)D_V^{(h-1)}a=hD_V^{(h)}a$ and
$D_V^{(h-1)}a\in V_{\wt a+h-1}$ give
\[
 hD_V^{(h)}a+(\wt a+h-1)D_V^{(h-1)}a
 =(L(-1)+L(0))D_V^{(h-1)}a.
\]
In characteristic zero, division by $h$ and induction on $h$ show that
each $D_V^{(h)}a-\binom{-\wt a}{h}a$ is a linear combination of vectors
of the form $(L(-1)+L(0))v$.
In positive characteristic, the two relation spaces can differ.
For example, let $\operatorname{char}F=p>2$, and take
$V=F\mathbf1\oplus F\varepsilon$ with multiplication satisfying
$\varepsilon^2=0$. Define $Y(a,z)b=ab$, $\omega=0$,
$V_0=F\mathbf1$, and $V_p=F\varepsilon$.
The commutative associative multiplication gives the vertex algebra Jacobi
identity. The operators $L(-1)=L(0)=0$ are compatible with the grading
because $p=0$ in $F$. Each homogeneous subspace is finite-dimensional, so
this is a vertex operator algebra under our conventions.
When $n=p$, computing $a\circ_{p,t}^{s}b$ requires the coefficient of
$z^{2p+1+t}$, and $2p+1+t$ is strictly greater than the numerator degree
$\wt a+p+s\leq2p+s$. Thus $C_p=0$.
Since $L(-1)+L(0)=0$, we have $O_p(V)=0$.
On the other hand, $D_V^{(p)}\varepsilon=0$ and
$\binom{-p}{p}=-1$ in $F$, so $\varepsilon\in\mathcal T$.
More explicitly, $(1+z)^{2p}=1+2z^p+z^{2p}$ shows that the terms
$r=0,p$ in \eqref{eq:higher-product} for $\varepsilon*_p\mathbf1$
contribute $2\varepsilon$ and $-2\varepsilon$, respectively, and the
remaining terms vanish. Hence $\varepsilon*_p\mathbf1=0$, whereas
$\mathbf1*_p\varepsilon=\varepsilon$.
Thus adjoining $\mathcal T$ to the relation space affects the identity
element of the quotient and the degree-zero action.
We use tildes to distinguish these quotient spaces.

We use the Lie algebra construction of Ren \citep{Ren17}. Set
$\mathcal L(V)=F[t,t^{-1}]\otimes_FV$. For $i\geq0$, define the operators
$D_t^{(i)}(t^r)=\binom ri t^{r-i}$. Using the previously defined
$D_V^{(i)}$, set
\[
 D^{(i)}=\sum_{j=0}^{i}D_t^{(j)}\otimes D_V^{(i-j)},\qquad
 \mathcal D\mathcal L(V)=\sum_{i>0}D^{(i)}\mathcal L(V),\qquad
 \widehat V=\mathcal L(V)/\mathcal D\mathcal L(V).
\]
Denote the image of $t^r\otimes a$ in $\widehat V$ by $a(r)$.
The Lie bracket is given below, together with the degree of $a(r)$ for
homogeneous $a$:
\[
 [a(p),b(q)]
 =
 \sum_{i\geq0}\binom pi(a_ib)(p+q-i),
 \qquad
 \deg a(r)=\wt a-r-1.
\]
The truncation property makes the sum in the bracket finite for fixed
$a,b$. Thus $\widehat V=\bigoplus_{k\in\mathbb Z}\widehat V_k$ is a
$\mathbb Z$-graded Lie algebra. Its universal enveloping algebra has the
induced grading $U(\widehat V)=\bigoplus_{k\in\mathbb Z}U(\widehat V)_k$.
More explicitly, $U(\widehat V)_k$ is spanned by monomials
$v^{(1)}(r_1)\cdots v^{(\ell)}(r_\ell)$ satisfying
$\sum_{i=1}^{\ell}(\wt v^{(i)}-r_i-1)=k$, where $\ell\geq0$, each
$v^{(i)}\in V$ is homogeneous, and $r_i\in\mathbb Z$. For $\ell=0$,
the monomial is the identity element of the enveloping algebra and has
degree zero.

Let $M$ be a weak $V$-module. For $a,b\in V$ and integers $p,q$,
multiplying the module Jacobi identity by $z_0^pz_2^q$ and taking the
residues in $z_0,z_1,z_2$ gives
\begin{equation}
\label{eq:iterate-mode}
 (a_pb)_q
 =
 \sum_{i\geq0}(-1)^i\binom pi
 \bigl(a_{p-i}b_{q+i}-(-1)^pb_{p+q-i}a_i\bigr).
\end{equation}
For each fixed $w\in M$, truncation gives $b_{q+i}w=0$ and $a_iw=0$
for all sufficiently large $i$. Thus only finitely many terms act
nontrivially on $w$. In \eqref{eq:iterate-mode}, take $b=\mathbf1$ and
$p=-h-1$, where $h\geq0$. The vacuum property gives
$\mathbf1_{-1}=\operatorname{Id}_M$ and $\mathbf1_j=0$ for $j\neq-1$.
Hence the product $a_{-h-1-i}\mathbf1_{q+i}$ can be nonzero only when
$i=-q-1\geq0$, and the product $\mathbf1_{q-h-1-i}a_i$ can be nonzero
only when $i=q-h\geq0$. Using $D_V^{(h)}a=a_{-h-1}\mathbf1$, we obtain
\begin{equation}
\label{eq:divided-mode}
 \begin{aligned}
 (D_V^{(h)}a)_q
 &=\begin{cases}
 (-1)^{-q-1}\binom{-h-1}{-q-1}a_{q-h},&q<0,\\
 0,&0\leq q<h,\\
 (-1)^q\binom{-h-1}{q-h}a_{q-h},&q\geq h,
 \end{cases}\\
 &=(-1)^h\binom qh a_{q-h}.
 \end{aligned}
\end{equation}
The simplification uses the integer binomial identity
$\binom{-h-1}{j}=(-1)^j\binom{h+j}{j}$, together with
$\binom qh=(-1)^h\binom{h-q-1}{h}$ for $q<0$.
Consequently, for $i>0$, the linear assignment
$t^r\otimes a\mapsto a_r$ sends $D^{(i)}(t^r\otimes a)$ to
\[
 \begin{aligned}
 \sum_{j=0}^{i}\binom rj(D_V^{(i-j)}a)_{r-j}
 &=\sum_{j=0}^{i}\binom rj(-1)^{i-j}
   \binom{r-j}{i-j}a_{r-i}\\
 &=\binom ri\sum_{j=0}^{i}(-1)^{i-j}\binom ij a_{r-i}=0.
 \end{aligned}
\]
The binomial identities used here hold over the integers and hence also
over $F$. This assignment therefore vanishes on
$\mathcal D\mathcal L(V)$ and descends to $a(r)\mapsto a_r$.
The mode commutator formula shows that the resulting map preserves the
Lie bracket, giving an action of $\widehat V$ on $M$. By the universal
property of the enveloping algebra, this action extends uniquely to an
algebra homomorphism $\varphi_M:U(\widehat V)\to\End_F(M)$ satisfying
$\varphi_M(a(r))=a_r$. For the corresponding action homomorphism over the
complex numbers, see Barron, Vander Werf, and Yang
\citep[equation~(5)]{BVWY19}.

\section{Zero-mode action and quotient algebras}
\label{sec:zero-modes}

We first specify the subspace notation used in the zero-mode action formula.
For a weak $V$-module $M$ and a linear subspace $X\subseteq V$, write
$o(X)=\{o(c)\mid c\in X\}$ for the set of zero modes of elements of $X$.
Since $o$ is linear, this set is a linear subspace of $\End_F(M)$.
For $Y\subseteq M$, the notation $o(X)Y=0$ means that $o(c)y=0$ for
every $c\in X$ and $y\in Y$. In particular, $o(\widetilde O_m(V))$
denotes the subspace of zero-mode operators on $M$ associated with all
elements of $\widetilde O_m(V)$. For a linear subspace
$X\subseteq U(\widehat V)$, write
$\varphi_M(X)=\{\varphi_M(x)\mid x\in X\}$.
We next specify the notation for composition and summation of operators.
For linear subspaces $X,Y\subseteq U(\widehat V)$, set
\[
 \varphi_M(X)\varphi_M(Y)
 =\operatorname{span}_F
 \{\varphi_M(x)\varphi_M(y)\mid x\in X,\ y\in Y\}
 \subseteq\End_F(M),
\]
where multiplication is composition and $\varphi_M(y)$ acts first.
For a weak $V$-module $M$ and $m\geq0$, consider operators
$T\in\End_F(M)$ that can be written as
\[
 T=\sum_{k>m}T_k,\qquad
 T_k\in
 \varphi_M\bigl(U(\widehat V)_k\bigr)
 \varphi_M\bigl(U(\widehat V)_{-k}\bigr),
\]
with only finitely many nonzero $T_kw$ for each fixed $w\in M$.
We denote the linear subspace consisting of all such $T$ by
$\sum_{k>m}\varphi_M(U(\widehat V)_k)\varphi_M(U(\widehat V)_{-k})$.
Every subsequent use of this summation notation requires that only finitely
many $T_kw$ be nonzero for each fixed $w\in M$.
The following inclusion follows from the zero-mode expansion. We record
the degrees of both mode factors and check that only finitely many terms
act nontrivially on each fixed vector, for later use.

\begin{theorem}
\label{thm:zero-mode-action}
\citep{Griffin26}
Let $M$ be a weak $V$-module. For every $m\geq0$, the following inclusion
holds in $\End_F(M)$:
\begin{equation}
\label{eq:MF}
 o\bigl(\widetilde O_m(V)\bigr)
 \subseteq
 \sum_{k>m}
 \varphi_M\bigl(U(\widehat V)_k\bigr)
 \varphi_M\bigl(U(\widehat V)_{-k}\bigr).
\end{equation}
\end{theorem}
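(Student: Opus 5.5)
By linearity of $o$, it suffices to treat the two families of spanning elements of $\widetilde O_m(V)=C_m+\mathcal T$ separately. The subspace on the right of \eqref{eq:MF} is closed under finite sums, because local finiteness is preserved under finite sums. I will show that each spanning element has zero mode $0$ or an expansion of the required form.

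\emph{Step 1: the space $\mathcal T$.} For homogeneous $a$ and $h\geq1$, take $q=\wt(D_V^{(h)}a)-1=\wt a+h-1$ in \eqref{eq:divided-mode}. This gives
\[
 o(D_V^{(h)}a)=(-1)^h\binom{\wt a+h-1}{h}a_{\wt a-1}=\binom{-\wt a}{h}o(a),
\]
using the integer identity $(-1)^h\binom{d+h-1}{h}=\binom{-d}{h}$. Hence $o$ vanishes on $\mathcal T$, and $0$ lies in the right-hand side trivially.

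\emph{Step 2: the space $C_m$.} By the identity recorded in the preliminaries, it suffices to treat $c=a\circ_{m,t}^{0}b$ with $a,b$ homogeneous and $t\geq0$. Expand $(1+z)^{\wt a+m}=\sum_j\binom{\wt a+m}{j}z^j$. Then
\[
 c=\sum_{j\geq0}\binom{\wt a+m}{j}a_{j-2m-2-t}b,
\]
and $c$ is homogeneous of weight $\wt a+\wt b+2m+1+t-j$ in each summand. Apply \eqref{eq:iterate-mode} with $p=j-2m-2-t$ and $q$ equal to the zero-mode index of each summand. This expresses $o(c)$ as an integer combination of products $a_{p-i}b_{q+i}$ and $b_{p+q-i}a_i$. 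The factor on the left has degree $\deg a(p-i)=\wt a-p+i-1$ (respectively $\deg b(\cdot)$), and the two factors have opposite degrees, since their total degree equals $\deg o(c)=0$.

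The key computation is to regroup these terms by the degree $k$ of the left factor. For the normal-ordered part $a_{p-i}b_{q+i}$, we have $k=\wt a-p+i-1=\wt a+2m+1+t-j+i$. For the other part, $k=\wt b-(p+q-i)-1$. I then show that, after summing over $j$ with the binomial weights, all contributions with $k\leq m$ cancel. The reason is that the coefficient of a fixed product $a_{r}b_{s}$ (with $r+s$ fixed) is a coefficient of the form $[z^{\cdot}]\,(1+z)^{\wt a+m}z^{-2m-2-t}\cdot(\text{expansion of }(1+z)^{p})$. Combining these via the expansions $(z_2+z_0)^{\cdots}$ reduces to extracting the coefficient of $z$ in a polynomial of degree too small, so the coefficient vanishes exactly in the range $k\leq m$. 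This is the characteristic-free version of the classical argument of Dong--Li--Mason and Barron--Vander Werf--Yang. Because all coefficients are integers, the cancellation over $\mathbb Z$ descends to $F$. This follows Griffin's proof, and I would reproduce the generating-function form of it: write
\[
 o(c)=\Res_{z_1}\Res_{z_2}\bigl(\cdots\bigr)
\]
using the Jacobi identity with $z_0=z_1-z_2$, and split the result into the $|z_1|>|z_2|$ and $|z_2|>|z_1|$ expansions.

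\emph{Step 3: local finiteness and identification.} Each left factor $a_r$ lies in $\varphi_M(U(\widehat V)_k)$, and each right factor $b_s$ lies in $\varphi_M(U(\widehat V)_{-k})$, via $\varphi_M(a(r))=a_r$. For fixed $w$, truncation kills $b_{q+i}w$ and $a_iw$ for $i$ large. Together with finiteness of $j$, this gives only finitely many nonzero $T_kw$. For the second family, the roles of $a$ and $b$ are reversed, and the left factor is $b$.

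The main obstacle is the cancellation in Step 2, namely showing that the coefficient vanishes for every $k\leq m$ uniformly in $t\geq0$. I would first verify it for $t=0$ by an explicit binomial identity, and then reduce $t>0$ to the same degree count, since the extra $z^{-t}$ only raises $k$.
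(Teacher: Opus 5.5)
\textbf{There is a genuine gap in Step 2.} Your Step 1, the reduction to $s=0$, and the finiteness bookkeeping in Step 3 are fine and agree with the paper. Step 2, however, carries the entire content of the theorem. There you assert that all contributions with $k\le m$ cancel, and the justification you give is aimed at the wrong terms.

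Put $N=2m+2+t$, $r=\wt a+m$, $p=j-N$, and let $q$ be the zero-mode index, so that $p+q=\wt a+\wt b-2$.
\begin{itemize}
\item The normal-ordered products $a_{p-i}b_{q+i}$ have $k=\wt a+2m+1+t-j+i\geq m+1+t$ for every $0\le j\le r$ and $i\geq0$. They need no cancellation at all, yet your explanation via ``the coefficient of a fixed product $a_rb_s$'' addresses exactly these.
\item The terms that must cancel are $b_{\wt a+\wt b-2-i}a_i$. Their left factor has degree $k=i-\wt a+1$, which is at most $m$ exactly when $0\le i<r$.
\item This degree does not depend on $t$, while the coefficients do, through $N$. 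So checking $t=0$ and then observing that ``the extra $z^{-t}$ only raises $k$'' does not handle these terms.
\end{itemize}

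\textbf{How to close it.} The missing identity is elementary. The coefficient of $b_{\wt a+\wt b-2-i}a_i$ is $-(-1)^{i+N}\sum_{j=0}^{r}(-1)^j\binom rj\binom{j-N}{i}$. The sum is $(-1)^r$ times the $r$-th forward difference at $j=0$ of the polynomial $j\mapsto\binom{j-N}{i}$, which has degree $i$. Hence it vanishes over $\mathbb Z$ for $0\le i<r$ and every $t\geq0$. For $i=r+\ell$ the coefficient equals $-(-1)^{N+\ell}\binom{-N}{\ell}$, which recovers the second sum in \eqref{eq:zero-mode-formula}.

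\textbf{How the paper avoids the cancellation.} It multiplies the Jacobi identity by $z_0^{-N}z_1^{\wt a+m+s}z_2^{\wt b+m+t-s}$. On the iterate side, the delta function turns $z_1^{\wt a+m+s}$ into $(z_2+z_0)^{\wt a+m+s}$, which reproduces $o(a\circ_{m,t}^{s}b)$. On the product side, $z_1^{\wt a+m+s}$ remains a single monomial. The resulting formula \eqref{eq:zero-mode-formula} contains only products with $k=m+1+t-s+\ell$ or $k=m+1+s+\ell$, where $\ell\geq0$, so the degree bound is read off directly. Your closing reference to ``the generating-function form'' points in this direction, but without the specific monomial it is not yet a proof.
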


\begin{proof}
The element $a\circ_{m,t}^{s}b$ is linear in $b$, and $b$ has a finite
homogeneous decomposition. It therefore suffices to treat homogeneous $b$.
Fix homogeneous $a,b\in V$ and write
$\alpha=\wt a$, $\beta=\wt b$, $r=\alpha+m+s$, and $N=2m+2+t$,
where $0\leq s\leq t$. Equation~\eqref{eq:ren-circle} and the definition
of the zero mode give
\begin{equation}
\label{eq:circle-zero-mode-expansion}
 \begin{aligned}
 o\bigl(a\circ_{m,t}^{s}b\bigr)
 &=\sum_{j=0}^{r}\binom rj o(a_{j-N}b)\\
 &=\sum_{j=0}^{r}\binom rj
 (a_{j-N}b)_{\alpha+\beta+N-j-2}.
 \end{aligned}
\end{equation}
Here each nonzero $a_{j-N}b$ has weight $\alpha+\beta+N-j-1$.
The mode form of the Jacobi identity in Griffin
\citep[proof of Proposition~4.13]{Griffin26} gives the following formula.
Its weak-module version is obtained by multiplying the weak module
Jacobi identity by $z_0^{-N}z_1^{r}z_2^{\beta+m+t-s}$ and taking residues
in all three variables. All binomial coefficients are evaluated in
$\mathbb Z$ and then mapped to $F$:
\begin{equation}
\label{eq:zero-mode-formula}
 \begin{aligned}
 o\bigl(a\circ_{m,t}^{s}b\bigr)
 ={}&
 \sum_{\ell\geq0}(-1)^\ell\binom{-N}{\ell}
 a_{\alpha-m+s-t-2-\ell}b_{\beta+m-s+t+\ell}\\
 &-
 \sum_{\ell\geq0}(-1)^{N+\ell}\binom{-N}{\ell}
 b_{\beta-m-s-2-\ell}a_{\alpha+m+s+\ell}.
 \end{aligned}
\end{equation}

By the definition of mode degree,
\[
 \begin{aligned}
 \deg a_{\alpha-m+s-t-2-\ell}&=m+1+t-s+\ell,\\
 \deg b_{\beta+m-s+t+\ell}&=-(m+1+t-s+\ell),\\
 \deg b_{\beta-m-s-2-\ell}&=m+1+s+\ell,\\
 \deg a_{\alpha+m+s+\ell}&=-(m+1+s+\ell).
 \end{aligned}
\]
Since $0\leq s\leq t$ and $\ell\geq0$, both positive degrees are at least
$m+1$. Thus the two products in \eqref{eq:zero-mode-formula} satisfy
\[
 \begin{aligned}
 a_{\alpha-m+s-t-2-\ell}b_{\beta+m-s+t+\ell}
 &\in\varphi_M(U(\widehat V)_k)\varphi_M(U(\widehat V)_{-k}),
 &&k=m+1+t-s+\ell>m,\\
 b_{\beta-m-s-2-\ell}a_{\alpha+m+s+\ell}
 &\in\varphi_M(U(\widehat V)_k)\varphi_M(U(\widehat V)_{-k}),
 &&k=m+1+s+\ell>m.
 \end{aligned}
\]

Fix $w\in M$. For all sufficiently large $\ell$, weak module truncation
gives $b_{\beta+m-s+t+\ell}w=0$ and $a_{\alpha+m+s+\ell}w=0$.
Thus only finitely many terms of \eqref{eq:zero-mode-formula} act
nontrivially on each fixed vector.

Finally, for homogeneous $a\in V_\alpha$, equation~\eqref{eq:divided-mode} gives
\begin{equation}
\label{eq:translation-zero-mode}
 o(D_V^{(h)}a)=(-1)^h\binom{\alpha+h-1}{h}o(a)
 =\binom{-\alpha}{h}o(a),\qquad o(\mathcal T)=0.
\end{equation}
Every element of $\widetilde O_m(V)$ is a finite linear combination of
elements $a\circ_{m,t}^{s}b$ and $D_V^{(h)}a-\binom{-\wt a}{h}a$.
Combining finitely many of the sums above still gives only finitely many
nonzero terms on each fixed vector. This proves \eqref{eq:MF}.
\end{proof}

We use the unital associative algebra structure on $\widetilde A_n(V)$
from Griffin \citep[Section~4.2]{Griffin26} and the zero-mode action on
$\Omega_n(M)$ from \citep[Definition/\allowbreak Lemma~4.9]{Griffin26}.
The relation space and multiplication there agree with the definition
following \eqref{eq:ren-circle} and with \eqref{eq:higher-product}, respectively.
The natural surjection between consecutive levels is verified directly
from these two definitions after the proposition.

\begin{proposition}
\label{prop:modified-zhu-algebra}
\citep{Griffin26}
For each $n\geq0$, the product $*_n$ induces an associative algebra
structure on $\widetilde A_n(V)$ with identity
$\mathbf1+\widetilde O_n(V)$. For every weak $V$-module $M$, the space
$\Omega_n(M)$ is a unital $\widetilde A_n(V)$-module under
$a+\widetilde O_n(V)\mapsto o(a)$. For $n\geq1$, the identity map
induces a surjective algebra homomorphism
$\widetilde A_n(V)\to\widetilde A_{n-1}(V)$.
\end{proposition}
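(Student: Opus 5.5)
The proposition is attributed to Griffin, and the paper says the associative algebra structure and the zero-mode action are taken from \citep[Section~4.2, Definition/Lemma~4.9]{Griffin26}. So the plan is to cite those two facts and to supply direct verifications of the remaining parts. The first part to check directly is that $\widetilde O_n(V)$ is the relation space Griffin uses; this was already observed after \eqref{eq:ren-circle}, via the identity $a\circ_{n,t}^{s}b=\sum_j\binom sj a\circ_{n,t-j}^{0}b$. The second is the surjection between consecutive levels. For orientation, I would also sketch why the module statement holds.

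For the module statement, fix $w\in\Omega_n(M)$ and $c\in\widetilde O_n(V)$. Theorem~\ref{thm:zero-mode-action} writes $o(c)w$ as a finite sum of terms $T_kw$ with $T_k\in\varphi_M(U(\widehat V)_k)\varphi_M(U(\widehat V)_{-k})$ and $k>n$. In each such term the right-hand factor is a product of modes of total degree $-k<-n$.

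To conclude that it kills $w$, I would use a PBW-type reordering. Any monomial of negative total degree $-k$ can be rewritten, modulo commutators, as a combination of monomials whose rightmost factor has degree $<-n$, or as monomials that send $w$ outside $\Omega_n$ in a controlled way. This is the delicate point, and it is exactly what Griffin's lemma handles. I therefore rely on the citation rather than reprove it.

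It then remains to note that $o(a*_nb)=o(a)o(b)$ on $\Omega_n(M)$, again by the cited lemma, and that $o(\mathbf1)=\operatorname{Id}$ by the vacuum property, which gives unitality.

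For the surjection, the identity map $V\to V$ descends to a map $\widetilde A_n(V)\to\widetilde A_{n-1}(V)$ once we show $\widetilde O_n(V)\subseteq\widetilde O_{n-1}(V)$. The space $\mathcal T$ is common to both levels. For $C_n$, expand
\[
 a\circ_{n,t}^{0}b=\Res_zY(a,z)b\frac{(1+z)^{\wt a+n}}{z^{2n+2+t}}
 =\Res_zY(a,z)b\frac{(1+z)^{\wt a+(n-1)}(1+z)}{z^{2(n-1)+2+(t+2)}}.
\]
This equals $a\circ_{n-1,t+2}^{1}b$, and the parameter range $0\leq1\leq t+2$ is valid. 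Hence $C_n\subseteq C_{n-1}$.

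Surjectivity is immediate because the map is induced by the identity on $V$. Multiplicativity requires $a*_nb-a*_{n-1}b\in\widetilde O_{n-1}(V)$.

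I would attack this by writing the difference of the two residue sums \eqref{eq:higher-product} as a single residue with rational function
\[
 (1+z)^{\wt a+n-1}\Bigl[\sum_{r=0}^n(-1)^r\tbinom{n+r}{n}\frac{1+z}{z^{n+r+1}}-\sum_{r=0}^{n-1}(-1)^r\tbinom{n-1+r}{n-1}\frac{1}{z^{n+r}}\Bigr].
\]
The goal is to check, with integer coefficients, that the part of the bracket in negative powers of $z$ contributing to the residue is divisible by $z^{-2n}$. The resulting residue is then a combination of elements $a\circ_{n-1,t}^{s}b$ with $0\leq s\leq t$. This binomial bookkeeping is the main obstacle, and I would carry it out as a generating-function identity over $\mathbb Z$. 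Alternatively, one can deduce multiplicativity from the module statement applied to faithful enough modules, though such modules are not directly available here, so the direct computation is preferable.
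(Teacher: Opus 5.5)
Your plan is correct and follows the paper. Like the paper, you cite Griffin for the algebra and module structure and prove $C_n\subseteq C_{n-1}$ by the same exponent shift. You also get multiplicativity from a single residue. The computation you defer is just Pascal's rule: it shows the bracket equals $(-1)^n\bigl(\binom{2n-1}{n}z^{-2n}+\binom{2n}{n}z^{-2n-1}\bigr)$, hence $a*_nb-a*_{n-1}b=(-1)^n\bigl(\binom{2n-1}{n}a\circ_{n-1,0}^{0}b+\binom{2n}{n}a\circ_{n-1,1}^{0}b\bigr)\in C_{n-1}$.

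Your orientation sketch needs no PBW reordering, and the reordering claim as you state it is not an argument. In \eqref{eq:zero-mode-formula} with $m=n$, the right-hand factors are the single modes $b_{\beta+n-s+t+\ell}$ and $a_{\alpha+n+s+\ell}$. Their indices are at least $\wt b+n$ and $\wt a+n$, so they annihilate $\Omega_n(M)$ by definition, and $o(\mathcal T)=0$ by \eqref{eq:translation-zero-mode}.
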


Fix $n\geq1$. For the relation spaces at consecutive levels,
equation~\eqref{eq:ren-circle} gives
\[
 a\circ_{n,t}^{s}b=a\circ_{n-1,t+2}^{s+1}b,
 \qquad 0\leq s\leq t,
\]
where $0\leq s+1\leq t+2$. Hence $C_n\subseteq C_{n-1}$ and
$\widetilde O_n(V)\subseteq\widetilde O_{n-1}(V)$.
To verify compatibility with multiplication, combine equal powers using
Pascal's identity to obtain
\[
 \begin{aligned}
 &(1+z)\sum_{r=0}^{n}(-1)^r\binom{n+r}{n}z^{-n-r-1}
 -\sum_{r=0}^{n-1}(-1)^r\binom{n-1+r}{n-1}z^{-n-r}\\
 &\qquad=(-1)^n\left(\binom{2n-1}{n}z^{-2n}
                   +\binom{2n}{n}z^{-2n-1}\right).
 \end{aligned}
\]
For homogeneous $a\in V$ and $b\in V$, multiplying by
$(1+z)^{\wt a+n-1}Y(a,z)b$ and taking the residue gives
\[
 a*_nb-a*_{n-1}b
 =(-1)^n\left(\binom{2n-1}{n}a\circ_{n-1,0}^{0}b
             +\binom{2n}{n}a\circ_{n-1,1}^{0}b\right)
 \in C_{n-1}.
\]
Extending over the finite homogeneous decomposition of $a$ shows that
this relation holds for all $a,b\in V$. Thus
$a+\widetilde O_n(V)\mapsto a+\widetilde O_{n-1}(V)$ is a surjective
map preserving multiplication and the identity.

\section{Construction of induced modules}
\label{sec:induced-construction}

Griffin \citep[Section~4.2]{Griffin26} states the existence of the induced
modules below and the isomorphism between their degree-$n$ subspaces and
the given algebra modules. In this section, we start with an induced Lie
algebra module and define a specific quotient by the weak associativity
relations. We prove that the submodule generated by these relations has
zero intersection with the given $U$, and verify admissibility and the
degree-$n$ isomorphism for the quotient. We then construct the quotient
$L_n(U)$ and establish the properties used in the next section to study
the kernel of the canonical map and the graded structure.

To define the action of $\widehat V_0$, we first compute
$a*_nb-b*_na$ modulo $\mathcal T$. Fix $n\geq0$, and let $a,b\in V$
be homogeneous, with $\alpha=\wt a$ and $\beta=\wt b$.
The congruences below mean that the difference lies in $\mathcal T$;
congruences of formal series are interpreted coefficientwise.
The skew-symmetry formula
$Y(b,z)a=\sum_{h\geq0}z^hD_V^{(h)}Y(a,-z)b$
and the weight $\alpha+\beta-k-1$ of $a_kb$ give
\[
 Y(b,z)a\equiv
 (1+z)^{-\alpha-\beta}
 Y\left(a,-\frac{z}{1+z}\right)b
 \pmod{\mathcal T((z))}.
\]
For each fixed coefficient, truncation ensures that the sums involved
are finite. The substitution $z=-x/(1+x)$ in the residue gives
\[
 b*_na\equiv(-1)^n\sum_{r=0}^n\binom{n+r}{n}
 \Res_x\frac{(1+x)^{\alpha+r-1}}{x^{n+r+1}}Y(a,x)b
 \pmod{\mathcal T}.
\]
The linear coefficient of this substitution is $-1$. A termwise
calculation for integer powers $z^k$ shows that the residue after the
change of variable is $1$ for $k=-1$ and zero otherwise. Thus the
residue change-of-variable formula holds over the present base field.

To compare the two products, we use the following rational function identity:
\[
 \sum_{r=0}^n\binom{n+r}{n}
 \frac{(-1)^r(1+x)^{n+1}-(-1)^n(1+x)^r}{x^{n+r+1}}=1.
\]
This identity can be proved in $\mathbb Z[y]$. Set
$F_n(y)=\sum_{r=0}^n\binom{n+r}{n}y^r$.
The binomial expansion in $\mathbb Z[[y]]$ gives
\[
 F_n(y)\equiv(1-y)^{-n-1}\pmod{y^{n+1}},\qquad
 (1-y)^{n+1}F_n(y)\equiv1\pmod{y^{n+1}}.
\]
Hence the polynomial
$(1-y)^{n+1}F_n(y)+y^{n+1}F_n(1-y)-1$
is divisible by $y^{n+1}$. This polynomial is invariant under
$y\mapsto1-y$, so it is also divisible by $(1-y)^{n+1}$.
Since
\[
 1=(y+(1-y))^{2n+1}
 \in(y^{n+1})+((1-y)^{n+1})\subseteq\mathbb Z[y],
\]
the sum of these two principal ideals is the whole ring, and their
intersection therefore equals their product. Thus the polynomial is
divisible by $y^{n+1}(1-y)^{n+1}$; since its degree is at most $2n+1$,
it is zero. Substituting $y=-1/x$ gives the required identity.
Factoring out $(1+x)^{\alpha-1}$ from the difference of the two
products and applying this identity yields
\begin{equation}
\label{eq:modified-commutator}
 a*_nb-b*_na\equiv
 \Res_z(1+z)^{\alpha-1}Y(a,z)b\pmod{\mathcal T}.
\end{equation}
This formula extends bilinearly using the finite homogeneous
decompositions of $a$ and $b$.

Equip $\widetilde A_n(V)$ with the commutator Lie bracket. We verify
that the assignment from the degree-zero subspace of $\mathcal L(V)$
to $\widetilde A_n(V)$ is compatible with the derivative relations.
The degree-zero derivative relations in $\mathcal L(V)$ are spanned by
$D^{(h)}(t^{\alpha+h-1}\otimes a)$. The assignment
$t^{\wt b-1}\otimes b\mapsto b+\widetilde O_n(V)$ sends these relations to
\[
 \sum_{j=0}^h\binom{\alpha+h-1}{j}
 \binom{-\alpha}{h-j}(a+\widetilde O_n(V))
 =\binom{h-1}{h}(a+\widetilde O_n(V))=0.
\]
Together with \eqref{eq:modified-commutator}, this gives a surjective
Lie algebra homomorphism
\begin{equation}
\label{eq:degree-zero-map}
 \widehat V_0\longrightarrow\widetilde A_n(V),\qquad
 a(\wt a-1)\longmapsto a+\widetilde O_n(V).
\end{equation}

Fix $n\geq1$. Using \eqref{eq:degree-zero-map}, we construct an
admissible module from any unital $\widetilde A_n(V)$-module $U$.
Set $P_n=\widehat V_0\oplus\bigoplus_{d<-n}\widehat V_d$.
The bracket respects degrees, so $P_n$ is a Lie subalgebra.
The degree-zero part acts on $U$ through \eqref{eq:degree-zero-map},
and the remaining part acts trivially. Place $U$ in degree $n$ and define
\[
 \overline M_n(U)=U(\widehat V)\otimes_{U(P_n)}U,\qquad
 Y_{\overline M_n(U)}(a,z)=\sum_{r\in\mathbb Z}a(r)z^{-r-1}.
\]
The PBW theorem ensures that $u\mapsto1\otimes u$ is injective, and
we write $u$ for its image. On the induced module $\overline M_n(U)$
and the quotient modules defined below, we denote the formal series
defined by these mode actions by $Y(a,z)$. The construction by induction
followed by quotienting by the weak associativity relations is due to
Dong, Li, and Mason \citep{DLM98} and Ren \citep{Ren17}.
We prove below that, for a module $U$ over the present quotient algebra,
the canonical map gives $U\cong M_n(U)(n)$.

If $N$ is a $U(\widehat V)$-module and $X\subseteq U(\widehat V)$ and
$B\subseteq N$ are linear subspaces, write
$XB=\operatorname{span}_F\{xb\mid x\in X,\ b\in B\}$. The equality
$XB=0$ means that every $x\in X$ annihilates every $b\in B$.
We use this notation for the submodule generated by the weak
associativity relations.

For homogeneous $a,b$, write $\alpha=\wt a$ and $\beta=\wt b$, and set
$\ell=\alpha+n$. For $s,t\in\mathbb Z$ and $u\in U$, put
\begin{equation}
\label{eq:association-coefficient}
 \begin{aligned}
 w_{s,t}(a,b;u)={}&
 \sum_{i\geq0}(-1)^i\binom si
 a(s+\ell-i)b(t+i)u\\
 &-\sum_{j=0}^{\ell}\binom{\ell}{j}
 (a_{s+j}b)(t+\ell-j)u.
 \end{aligned}
\end{equation}
Since $b(t+i)u=0$ for $t+i\geq\beta+n$, this expression is a finite
sum. Let $W$ be the linear span of these vectors, and set
$R=U(\widehat V) W$ and $M_n(U)=\overline M_n(U)/R$.
Expression~\eqref{eq:association-coefficient} is the residue of
\[
 (z_0+z_2)^{\ell}Y(a,z_0+z_2)Y(b,z_2)u
 -(z_2+z_0)^{\ell}Y(Y(a,z_0)b,z_2)u
\]
after multiplication by $z_0^sz_2^t$; the substitution in the product
of vertex operators is expanded in powers of $z_2/z_0$.
Each relation vector has degree $\beta-s-t-2$, so $R$ is graded.

In constructing $V$-modules from $A(V)$-modules over the complex numbers,
Zhu \citep{Zhu96} uses rational functions in arbitrarily many variables
and extracts coefficients corresponding to mode actions.
Write $A=\widetilde A_n(V)$, and let $\rho:V\to A$ be the quotient map.
We use $\rho(\mathcal T)=0$ and the vanishing of the elements
$a\circ_{n,t}^{s}b$ in the quotient to construct rational functions
in arbitrarily many variables. These functions allow us to treat the
action of any universal enveloping algebra element on the left of a
relation vector. Linear maps acting on vector-valued formal series and
rational functions are extended coefficientwise.
For algebraically independent indeterminates $x_1,\ldots,x_m$, let
$F[x_i^{\pm1},(x_i-x_j)^{-1}:i<j]$ denote the $F$-algebra generated by
$x_i$, $x_i^{-1}$ ($1\leq i\leq m$), and
$(x_i-x_j)^{-1}$ ($1\leq i<j\leq m$).
It is a subring of the rational function field $F(x_1,\ldots,x_m)$,
and its elements have the form
\[
 \frac{P(x_1,\ldots,x_m)}
 {\prod_{i=1}^m x_i^{e_i}\prod_{1\leq i<j\leq m}(x_i-x_j)^{N_{ij}}},
 \qquad P\in F[x_1,\ldots,x_m],\quad e_i,N_{ij}\in\mathbb Z_{\geq0}.
\]
The notation $\iota_{x_1,\ldots,x_m}$ denotes the expansion map from
this rational function ring to the iterated Laurent series field
$F((x_1))\cdots((x_m))$. It fixes the elements of $F$ and each $x_i$,
is specified by
\[
 \iota_{x_1,\ldots,x_m}(x_i-x_j)^{-1}
 =\sum_{r\geq0}x_i^{-r-1}x_j^r,
 \qquad 1\leq i<j\leq m,
\]
and extends by addition and multiplication. Thus, for $i<j$, the
expansion is in nonnegative integer powers of $x_j/x_i$.
This map is injective. For rational functions with coefficients in
$A$, we use the same expansion, leaving the coefficients unchanged.

\begin{lemma}
\label{lem:multivariable-functions}
Let $m\geq1$, let $a_1,\ldots,a_m,c\in V$ be homogeneous, and write
$\alpha_i=\wt a_i$. The formal series
\[
 \rho\left(\prod_{i=1}^m(1+x_i)^{\alpha_i+n}
 Y(a_1,x_1)\cdots Y(a_m,x_m)c\right)
\]
is the $\iota_{x_1,\ldots,x_m}$ expansion of a rational function
$G_{a_1,\ldots,a_m;c}\in
 A\otimes_FF[x_i^{\pm1},(x_i-x_j)^{-1}:i<j]$.
In the subscript of $G_{a_1,\ldots,a_m;c}$, the entries
$a_1,\ldots,a_m$ correspond, in order, to
$Y(a_1,x_1),\ldots,Y(a_m,x_m)$, and the entry $c$ after the semicolon
is the vector on which this product of vertex operators acts.
For each $i$, regard the other variables as parameters and take the
Laurent expansion of this function in $x_i^{-1}$. Every exponent of
$x_i$ in this expansion is at most $2n$.
Interchanging the positions of $a_m$ and $c$ gives the following
transformation formula:
\begin{equation}
\label{eq:multivariable-change-base}
 \begin{aligned}
 &G_{a_1,\ldots,a_m;c}(x_1,\ldots,x_m)\\
 &\quad=(1+x_m)^{(m+1)n}
 G_{a_1,\ldots,a_{m-1},c;a_m}
 \left(\frac{x_1-x_m}{1+x_m},\ldots,
 \frac{x_{m-1}-x_m}{1+x_m},-\frac{x_m}{1+x_m}\right).
 \end{aligned}
\end{equation}
For fixed $a_1,\ldots,a_m,c$, abbreviate this function to $G$.
If $X_1,\ldots,X_m$ are algebraically independent indeterminates, then
at $t=0$ we have
\begin{equation}
\label{eq:multivariable-total-degree}
 G(t^{-1}X_1,\ldots,t^{-1}X_m)
 \in t^{-(m+1)n}(A\otimes_FF(X_1,\ldots,X_m))[[t]].
\end{equation}
\end{lemma}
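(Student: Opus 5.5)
We argue by induction on $m$, following Zhu's construction of rational functions adapted to the relations of $\widetilde A_n(V)$.

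\emph{Step 1: the case $m=1$.} Set
\[
 \rho\bigl((1+x)^{\alpha+n}Y(a,x)c\bigr)=\sum_k \rho(a_kc)\,[\text{coefficient}].
\]
The relations $a\circ_{n,t}^{0}c\in\widetilde O_n(V)$ say exactly that
\[
 \Res_x\,x^{-2n-2-t}(1+x)^{\alpha+n}Y(a,x)c\equiv0 \qquad\text{for all } t\geq0.
\]
So in $A((x))$ the series $\rho\bigl((1+x)^{\alpha+n}Y(a,x)c\bigr)$ has no monomials $x^j$ with $j\geq 2n+1$. Its lower powers are bounded below by truncation. Hence it is a Laurent polynomial, i.e.\ an element of $A\otimes F[x^{\pm1}]$, with every exponent at most $2n$. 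This gives the $m=1$ case of the degree bound.

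For the transformation formula with $m=1$, apply skew-symmetry modulo $\mathcal T$, exactly as in the derivation of \eqref{eq:modified-commutator}:
\[
 Y(a,x)c\equiv(1+x)^{-\alpha-\gamma}\,Y\!\left(c,-\tfrac{x}{1+x}\right)a .
\]
Multiplying by $(1+x)^{\alpha+n}$ and comparing with $(1+y)^{\gamma+n}$ at $y=-x/(1+x)$, so that $1+y=(1+x)^{-1}$, produces the factor $(1+x)^{2n}$. This matches \eqref{eq:multivariable-change-base} with $(m+1)n=2n$.

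\emph{Step 2: the inductive definition of $G$.} Assume the result for $m-1$ and all homogeneous arguments. Then $\rho\bigl(\prod_{i<m}(1+x_i)^{\alpha_i+n}Y(a_1,x_1)\cdots Y(a_{m-1},x_{m-1})\,v\bigr)$ is given by a rational function, for each homogeneous $v=a_{m,k}c$. Summing over $k$ with the weight $(1+x_m)^{\alpha_m+n}x_m^{-k-1}$ is a priori an infinite sum in $x_m$.

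To make it rational, I would instead \emph{define} $G_{a_1,\ldots,a_m;c}$ by the right-hand side of \eqref{eq:multivariable-change-base}, with $c$ moved inside. Concretely, I would use weak associativity in the form
\[
 Y(a_i,x_i)Y(a_m,x_m)c\;\sim\; Y\bigl(Y(a_m,x_m)c\,\cdots\bigr),
\]
together with the commutator and associator relations already encoded in $\rho$, namely \eqref{eq:modified-commutator} and the vanishing of $a\circ b$. I would then verify that its $\iota$-expansion agrees with the series. The cleaner alternative is the one I would try first. Write $G$ as a finite sum over the polynomial in $x_m$ produced by the $m=1$ step applied to the innermost pair. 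Reduce, using the commutation relation $[a(p),b(q)]$ and the congruence $a*_nb-b*_na$, to terms where $c$ is replaced by $a_{m,k}c$ for only finitely many $k$. Then apply induction. The poles $(x_i-x_m)^{-1}$ arise from the iterate formula \eqref{eq:iterate-mode} when moving $Y(a_m,x_m)$ past the others.

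\emph{Step 3: degree bounds and the total-degree statement.} The exponent bound in each $x_i$ will follow by symmetry. Using the transformation formula, together with the analogous commutativity swaps of adjacent $a_i,a_j$ (which follow from locality modulo $\rho$), we can bring any variable to the position where the $m=1$ bound applies. The substitutions $x_j\mapsto (x_j-x_m)/(1+x_m)$ do not increase the $x_i$-degree beyond the stated bound. For \eqref{eq:multivariable-total-degree}, I would show the total degree of $G$ (numerator degree minus denominator degree) is at most $(m+1)n$. This comes from induction: the factor $(1+x_m)^{(m+1)n}$ has degree $(m+1)n$, while the substituted $G$ for $m-1$ variables, with arguments homogeneous of degree $0$ at infinity, contributes degree at most $mn$ in the arguments. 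Those arguments have degree zero jointly, so the counts must be balanced carefully.

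\emph{Main obstacle.} I expect the main obstacle to be Step 2: proving that the infinite sum over $a_{m,k}c$ actually yields a rational function in $x_m$ with only the poles $(x_i-x_m)^{-1}$, while keeping every manipulation valid modulo $\widetilde O_n(V)$ in positive characteristic, where division by integers is unavailable and the $\mathcal T$-relations replace $L(-1)$-arguments.
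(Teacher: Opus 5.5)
Your $m=1$ case is correct, but there is a genuine gap at the point you flag yourself: nothing in the proposal proves that the series is rational when $m\geq2$, and the routes you suggest do not get there. Defining $G_{a_1,\ldots,a_m;c}$ by the right-hand side of \eqref{eq:multivariable-change-base} is circular. The function $G_{a_1,\ldots,a_{m-1},c;a_m}$ again involves $m$ vertex operators in $m$ variables, since the formula only exchanges $a_m$ and $c$ and never lowers $m$. Your total-degree count makes the same slip by treating this function as if it had $m-1$ variables.

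Summing the $(m-1)$-variable functions $G_{a_1,\ldots,a_{m-1};a_{m,k}c}$ with weights $(1+x_m)^{\alpha_m+n}x_m^{-k-1}$ over the infinitely many $k$ also fails. There is no common denominator: the pole orders at $x_i=0$ need not be bounded in $k$. Moreover, the poles along $x_i=x_m$ appear only after a resummation you have no means to control. The congruence for $a*_nb-b*_na$ and the bracket in $\widehat V$ concern particular residues and the Lie algebra, and give no handle on the full series $\rho(\cdots)$. Finally, the transformation formula is established only for $m=1$.

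The missing idea is the standard locality argument, applied in $V$ before $\rho$; it needs no induction on $m$. Choose $e_i$ with $x_i^{e_i}Y(a_i,x_i)c\in V[[x_i]]$ and $N_{ij}$ with $(x_i-x_j)^{N_{ij}}[Y(a_i,x_i),Y(a_j,x_j)]=0$. Put $Q=\prod_kx_k^{e_k}\prod_{k<\ell}(x_k-x_\ell)^{N_{k\ell}}$, and let $P$ be $Q$ times the given series.

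By locality, for each $i$ you may rewrite $P$ with $Y(a_i,x_i)$ rightmost, which gives $[x_i^r]P=0$ for $r<0$. You may also rewrite it with $Y(a_i,x_i)$ leftmost. There, $\rho(a_i\circ_{n,t}^{0}v)=0$ for \emph{every} coefficient $v$ of the product to its right, which gives $[x_i^r]P=0$ for $r>e_i+2n+\sum_{j\neq i}N_{ij}$. Hence $P$ is a polynomial and $G=P/Q$. The bound $2n$ at infinity then follows from $\deg_{x_i}P\leq\deg_{x_i}Q+2n$. Your Step~3 idea of moving $Y(a_i,x_i)$ to the leftmost position is exactly this, but it must be used to construct $G$, not afterwards.

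For general $m$, the transformation formula follows from $Y(a_1,x_1)\cdots Y(a_m,x_m)c=D(x_m)Y(a_1,x_1-x_m)\cdots Y(a_{m-1},x_{m-1}-x_m)Y(c,-x_m)a_m$, where $D(y)=\sum_{h\geq0}y^hD_V^{(h)}$. Combine this with $\rho(D(y)v)=(1+y)^{-\wt v}\rho(v)$ and homogeneity in the weights.

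For \eqref{eq:multivariable-total-degree} no degree bookkeeping is needed. After $x_i=t^{-1}X_i$, the new arguments take the values $(X_i-X_m)/X_m$ and $-1$ at $t=0$, and all coordinates and pairwise differences are nonzero there. So $G_{a_1,\ldots,a_{m-1},c;a_m}$ evaluated at them is a power series in $t$. All negative powers come from $(1+x_m)^{(m+1)n}=t^{-(m+1)n}(t+X_m)^{(m+1)n}$.
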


\begin{proof}
Choose $e_i\geq0$ such that $x_i^{e_i}Y(a_i,x_i)c\in V[[x_i]]$.
By locality of vertex operators, we can choose nonnegative integers
$N_{ij}=N_{ji}$ such that
$(x_i-x_j)^{N_{ij}}[Y(a_i,x_i),Y(a_j,x_j)]=0$. Set
\[
 \begin{aligned}
 Q&=\prod_{k=1}^m x_k^{e_k}
       \prod_{k<\ell}(x_k-x_\ell)^{N_{k\ell}},\\
 P&=Q\prod_{k=1}^m(1+x_k)^{\alpha_k+n}
       \rho\bigl(Y(a_1,x_1)\cdots Y(a_m,x_m)c\bigr).
 \end{aligned}
\]
Fix $i$, and list the remaining indices in increasing order as
$j_1,\ldots,j_{m-1}$. Locality gives the following equalities of
formal series:
\[
 \begin{aligned}
 P
 &=Q\prod_{k=1}^m(1+x_k)^{\alpha_k+n}
   \rho\bigl(Y(a_{j_1},x_{j_1})\cdots
     Y(a_{j_{m-1}},x_{j_{m-1}})Y(a_i,x_i)c\bigr)\\
 &=Q\prod_{k=1}^m(1+x_k)^{\alpha_k+n}
   \rho\bigl(Y(a_i,x_i)Y(a_{j_1},x_{j_1})\cdots
     Y(a_{j_{m-1}},x_{j_{m-1}})c\bigr).
 \end{aligned}
\]
In the expression with $Y(a_i,x_i)$ on the far right,
$x_i^{e_i}Y(a_i,x_i)c\in V[[x_i]]$, and the other scalar factors
are polynomials in $x_i$. Hence $[x_i^r]P=0$ for $r<0$.
In the expression with $Y(a_i,x_i)$ on the far left, each coefficient
$v\in V$ of the series to its right satisfies
\[
 \begin{aligned}
 [x_i^r]\rho\bigl((1+x_i)^{\alpha_i+n}Y(a_i,x_i)v\bigr)
 &=\rho\left(\Res_{x_i}
     \frac{(1+x_i)^{\alpha_i+n}}{x_i^{r+1}}Y(a_i,x_i)v\right)\\
 &=\rho\bigl(a_i\circ_{n,r-2n-1}^{0}v\bigr)
 =0,\qquad r\geq2n+1.
 \end{aligned}
\]
Together with $\deg_{x_i}Q=e_i+\sum_{j\ne i}N_{ij}$, this gives
\[
 [x_i^r]P=0
 \quad\text{if}\quad
 r<0\ \text{or}\ r>e_i+2n+\sum_{j\ne i}N_{ij}.
\]
These bounds hold for every $i$, so $P\in A[x_1,\ldots,x_m]$.
Define the rational function $G=P/Q$. By the definition of $P$, in the
specified space of iterated Laurent series we have
\[
 \begin{aligned}
 \iota_{x_1,\ldots,x_m}G
 &=P\,\iota_{x_1,\ldots,x_m}(Q^{-1})\\
 &=\prod_{i=1}^m(1+x_i)^{\alpha_i+n}
   \rho\bigl(Y(a_1,x_1)\cdots Y(a_m,x_m)c\bigr).
 \end{aligned}
\]
The product of vertex operators on the right is interpreted through
the mode expansions and is the formal series in the statement of the
lemma. The displayed equality expresses this series as the
$\iota_{x_1,\ldots,x_m}$ expansion of the rational function $G$.
As a polynomial in $x_i$, $Q$ has leading coefficient a nonzero rational
function of the remaining variables. Therefore the largest exponent of $x_i$
in the Laurent expansion of $P/Q$ in $x_i^{-1}$ is at most
\[
 \left(e_i+2n+\sum_{j\ne i}N_{ij}\right)
 -\left(e_i+\sum_{j\ne i}N_{ij}\right)=2n.
\]
The coefficients of $G$ lie in the subspace spanned by the finitely
many coefficients of $P$.

Let $D(y)=\sum_{h\geq0}y^hD_V^{(h)}$. For homogeneous $v$, the
equality $\rho(\mathcal T)=0$ gives
\[
 \rho(D(y)v)
 =\sum_{h\geq0}y^h\rho(D_V^{(h)}v)
 =\sum_{h\geq0}\binom{-\wt v}{h}y^h\rho(v)
 =(1+y)^{-\wt v}\rho(v).
\]
Skew-symmetry and the translation formula give
\[
 \begin{aligned}
 &Y(a_1,x_1)\cdots Y(a_m,x_m)c\\
 &=Y(a_1,x_1)\cdots Y(a_{m-1},x_{m-1})
 D(x_m)Y(c,-x_m)a_m\\
 &=D(x_m)
 Y(a_1,x_1-x_m)\cdots Y(a_{m-1},x_{m-1}-x_m)
 Y(c,-x_m)a_m.
 \end{aligned}
\]
Set $\gamma=\wt c$. For independent variables $z_1,\ldots,z_m$, write
\[
 \begin{aligned}
 &Y(a_1,z_1)\cdots Y(a_{m-1},z_{m-1})Y(c,z_m)a_m\\
 &\qquad=\sum_{r_1,\ldots,r_m}v_{r_1,\ldots,r_m}z_1^{r_1}\cdots z_m^{r_m},
 \qquad
 v_{r_1,\ldots,r_m}\in V_{\sum_i\alpha_i+\gamma+\sum_i r_i}.
 \end{aligned}
\]
Applying the preceding translation relation coefficientwise yields
\[
 \begin{aligned}
 &\rho\bigl(D(y)Y(a_1,z_1)\cdots
       Y(a_{m-1},z_{m-1})Y(c,z_m)a_m\bigr)\\
 &=\sum_{r_1,\ldots,r_m}
   (1+y)^{-\sum_i\alpha_i-\gamma-\sum_i r_i}
   \rho(v_{r_1,\ldots,r_m})z_1^{r_1}\cdots z_m^{r_m}\\
 &=(1+y)^{-\sum_i\alpha_i-\gamma}
   \rho\left(Y\left(a_1,\frac{z_1}{1+y}\right)\cdots
       Y\left(a_{m-1},\frac{z_{m-1}}{1+y}\right)
       Y\left(c,\frac{z_m}{1+y}\right)a_m\right).
 \end{aligned}
\]
Set $y=x_m$, $z_i=x_i-x_m$ ($i<m$), and $z_m=-x_m$, expanding
each $(x_i-x_m)^{r_i}$ in powers of $x_m/x_i$.
Choose $K\geq0$ such that $Y(c,z_m)a_m\in z_m^{-K}V[[z_m]]$.
If $h$ is the index of a term of $D(x_m)$ and $j_i\geq0$ is the
expansion index for $(x_i-x_m)^{r_i}$, then a contribution to a fixed
coefficient of $x_m^q$ satisfies
\[
 h+\sum_{i<m}j_i+r_m=q,\qquad r_m\geq-K,
 \qquad 0\leq h+\sum_{i<m}j_i\leq q+K.
\]
After fixing the powers of the other variables as well, only finitely
many terms contribute. Thus these substitutions and coefficientwise
operations are well defined.

Put $y_i=(x_i-x_m)/(1+x_m)$ ($i<m$) and
$y_m=-x_m/(1+x_m)$. Then
\[
 1+y_i=\frac{1+x_i}{1+x_m}\ (i<m),\qquad
 1+y_m=\frac1{1+x_m}.
\]
Multiplying the preceding equality by $\prod_i(1+x_i)^{\alpha_i+n}$ gives
\[
 \begin{aligned}
 &G_{a_1,\ldots,a_m;c}(x_1,\ldots,x_m)\\
 &=\frac{\prod_i(1+x_i)^{\alpha_i+n}}
 {(1+x_m)^{\sum_i\alpha_i+\gamma}
   \prod_{i<m}(1+y_i)^{\alpha_i+n}(1+y_m)^{\gamma+n}}
 G_{a_1,\ldots,a_{m-1},c;a_m}(y_1,\ldots,y_m)\\
 &=(1+x_m)^{(m+1)n}
 G_{a_1,\ldots,a_{m-1},c;a_m}(y_1,\ldots,y_m).
 \end{aligned}
\]
Here we first compare the corresponding expansions. Injectivity of
the expansion map then gives this equality of rational functions,
which is \eqref{eq:multivariable-change-base}.

Finally, substitute $x_i=t^{-1}X_i$, and denote the resulting
coordinates by $y_i(t)$. We have
\[
 \begin{aligned}
 y_i(t)&=\frac{X_i-X_m}{t+X_m},
 &y_i(0)&=\frac{X_i-X_m}{X_m}\neq0 &&(i<m),\\
 y_m(t)&=-\frac{X_m}{t+X_m},
 &y_m(0)&=-1\neq0,\\
 y_i(t)-y_j(t)&=\frac{X_i-X_j}{t+X_m}
 &&&& (i<j<m),\\
 y_i(t)-y_m(t)&=\frac{X_i}{t+X_m}
 &&&& (i<m).
 \end{aligned}
\]
All coordinates and coordinate differences are nonzero at $t=0$, so
\[
 G_{a_1,\ldots,a_{m-1},c;a_m}(y_1(t),\ldots,y_m(t))
 \in (A\otimes_FF(X_1,\ldots,X_m))[[t]].
\]
Consequently,
\[
 \begin{aligned}
 &G(t^{-1}X_1,\ldots,t^{-1}X_m)\\
 &=t^{-(m+1)n}(t+X_m)^{(m+1)n}
   G_{a_1,\ldots,a_{m-1},c;a_m}(y_1(t),\ldots,y_m(t))\\
 &\in t^{-(m+1)n}(A\otimes_FF(X_1,\ldots,X_m))[[t]],
 \end{aligned}
\]
which proves \eqref{eq:multivariable-total-degree}.
\end{proof}

We now use the coefficients of these rational functions to construct a linear
map and prove that the canonical map $U\to M_n(U)$ is injective.
The operators required below are defined by
$\partial_z^{(h)}z^r=\binom rh z^{r-h}$ for $h\geq0$ and $r\in\mathbb Z$,
and extended linearly to formal series by applying them term by term.
We use this convention throughout the rest of the paper.

\begin{lemma}
\label{lem:induction-coefficient-map}
Let $n\geq1$, and let $U$ be a unital $\widetilde A_n(V)$-module.
There exists a graded linear map $\tau:\overline M_n(U)\to U$ such that
\[
 \tau|_U=\operatorname{id}_U,\qquad \tau(R)=0,
\]
where $U$ is concentrated in degree $n$. In particular, $R\cap U=0$, and
the canonical map
\[
 U\longrightarrow M_n(U)(n),\qquad u\longmapsto1\otimes u+R
\]
is injective.
\end{lemma}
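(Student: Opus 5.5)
The plan is to construct $\tau$ in the style of Zhu's argument, using the rational functions $G_{a_1,\ldots,a_m;c}$ of Lemma~\ref{lem:multivariable-functions}. By PBW, $\overline M_n(U)$ is spanned by vectors $a_1(r_1)\cdots a_m(r_m)u$ with homogeneous $a_i$. The degree of such a vector is $n+\sum_i(\alpha_i-r_i-1)$. On a vector of degree $n$, I would define $\tau$ by extracting a coefficient of a rational function. Choose a homogeneous $c\in V$ and $u'\in U$ representing $u$; since $U$ is unital and $\rho$ is surjective, one can write $u=\rho(c)u_0$, or more simply work with $G_{a_1,\ldots,a_m;\mathbf1}$ acting on $u$. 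Then set
\[
 \tau\bigl(a_1(r_1)\cdots a_m(r_m)u\bigr)
 =\Res_{x_1}\cdots\Res_{x_m}\,
 \iota_{x_m,\ldots,x_1}\Bigl(\prod_i x_i^{r_i}(1+x_i)^{-\alpha_i-n}\cdots\Bigr)
 G_{a_1,\ldots,a_m;\mathbf1}(x_1,\ldots,x_m)\cdot u .
\]
Here the expansion order is reversed, with $x_m$ largest, so as to mimic the order in which the modes act on $u$. On vectors of degree $\neq n$, $\tau$ is set to zero, which makes it graded.

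The first step is well-definedness on $\overline M_n(U)=U(\widehat V)\otimes_{U(P_n)}U$. Four things must be checked against the formula.
\begin{itemize}
\item[(i)] Commutator relations $[a(p),b(q)]$ must be compatible. This follows from locality: the same rational function is expanded in two orders, and the difference is governed by the $\delta$-function term. That term reproduces $\sum_i\binom pi(a_ib)(p+q-i)$, via the rewriting of $G$ with the iterate in Lemma~\ref{lem:multivariable-functions}.
\item[(ii)] The derivative relations $\mathcal D\mathcal L(V)$ must be compatible. This uses $\rho(\mathcal T)=0$, exactly as in the computation preceding \eqref{eq:degree-zero-map}, together with the $(1+x)$-twisting.
\item[(iii)] Elements $a(r)$ of degree $<-n$ acting directly on $u$ must give zero. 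This is the exponent bound: every exponent of $x_m$ is at most $2n$, so the residue vanishes once $r_m$ is large.
\item[(iv)] Degree-zero elements $a(\alpha-1)u$ must map to $\rho(a)u$. For $m=1$, $\Res_x x^{\alpha-1}(1+x)^{-\alpha-n}G_{a;\mathbf1}$ should return $\rho(a)$; this follows from $\rho(\mathcal T)=0$ and the definition of $*_n$.
\end{itemize}
Then $\tau|_U=\operatorname{id}_U$ is the case $m=0$.

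The second step is to show $\tau(R)=0$. It suffices to prove $\tau(xw)=0$ for $x$ a monomial in $U(\widehat V)$ and $w=w_{s,t}(a,b;u)$. The vector $xw$ is expressed through $G_{\ldots,a,b;\mathbf1}$ and $G_{\ldots,a_sb;\mathbf1}$. The weak associativity combination \eqref{eq:association-coefficient} corresponds to expanding the single rational function $(x_0+x_2)^\ell G(\ldots,x_0+x_2,x_2)$ in two ways. The factor $(z_0+z_2)^\ell$ with $\ell=\alpha+n$ exactly cancels the possible pole at $x_{a}=-1$, or rather at $x_a=0$ after the shift. The bound \eqref{eq:multivariable-total-degree}, of total degree $-(m+1)n$ at infinity, guarantees that no residue at infinity contributes. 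Hence the two expansions give equal residues.

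The main obstacle is step (i) combined with the well-definedness of the iterated-residue formula under reordering. One must show that the residue of the expansion in order $\iota_{x_m,\ldots,x_1}$ depends only on the element of $U(\widehat V)$. The approach here is to reduce to the two-variable case, using that $G$ is symmetric under simultaneous permutation of the $(a_i,x_i)$ by locality. The commutator difference is then a residue of $G$ at $x_i=x_j$, computed via the transformation formula \eqref{eq:multivariable-change-base}. Care is needed to confirm that the factors $(1+x_i)^{-\alpha_i-n}$ interact correctly with the $(1+x)^{\wt a+n}$ normalization, so that the residue at $x_i=x_j$ yields exactly $(a_kb)$ with weight $\alpha+\beta-k-1$. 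Once $\tau$ exists, $R\cap U\subseteq\ker\tau\cap U=0$ gives injectivity of $u\mapsto1\otimes u+R$.
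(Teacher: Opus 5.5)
Your architecture matches the paper's: a coefficient map $f:U(\widehat V)\to\widetilde A_n(V)$ built from the rational functions of Lemma~\ref{lem:multivariable-functions}, then $\tau(x\otimes u)=\pi_U(f(x))u$, then $\tau(R)=0$ by comparing two expansions of one function. However, two steps fail as stated.

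\textbf{First, your defining formula confuses coordinates.} In $G_{a_1,\ldots,a_m;\mathbf1}(x_1,\ldots,x_m)$ the $x_i$ are algebra coordinates. The mode $a_i(r_i)$ must instead be extracted by $z_i^{r_i}$ in the module coordinate $z_i=1+x_i$. The iterated Laurent expansion must be taken in the $z_i$, with $z_1$ largest, as for $Y(a_1,z_1)\cdots Y(a_m,z_m)u$. Your formula uses $x_i^{r_i}$ and an iterated Laurent expansion in the $x_i$. Then $(1+x_i)^{-\alpha_i-n}$ is expanded binomially and simply cancels the normalization built into $G$. Your residue is therefore $\rho$ of a product of modes of $V$ applied to $\mathbf1$. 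For $m=1$ it equals $\rho(a_{\wt a-1}\mathbf1)=0$ whenever $\wt a\geq1$, so (iv) is false. The correct version is
\[
 f(a_1(r_1)\cdots a_m(r_m))=\Res_{z_1,\ldots,z_m}z_1^{r_1}\cdots z_m^{r_m}\,
 \iota_{z_1,\ldots,z_m}\prod_{i}z_i^{-\alpha_i-n}\,
 G_{a_1,\ldots,a_m;\mathbf1}(z_1-1,\ldots,z_m-1).
\]
This is the paper's $H_m$: remove the vacuum from $H_{m+1}$ and set the reference variable to $1$ using homogeneity. Relatedly, (iii) needs the pole order at most $\wt b+n$ at $z_m=0$, as in \eqref{eq:rational-pole-bound}, not the exponent bound $2n$ at $x_m=\infty$. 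Likewise, in the proof of $\tau(R)=0$, what makes it work is this pole bound at $z_1=0$, cleared by $(z_0+z_2)^{\alpha+n}$. It forces the product expansion and the iterate expansion \eqref{eq:rational-iterate} to equal the same Laurent polynomial. No argument about residues at infinity is involved.

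\textbf{Second, and more seriously, your checks (iii) and (iv) do not establish well-definedness on $U(\widehat V)\otimes_{U(P_n)}U$.} You need $\pi_U\bigl(f(x\,b(\wt b-1))\bigr)=\pi_U(f(x))\pi_U(\rho(b))$ for every $x\in U(\widehat V)$, not only for $x=1$. This is where the product $*_n$ enters, together with the fact that $U$ is an $\widetilde A_n(V)$-module and not merely a vector space. Your $m=1$ case of (iv) involves neither.

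The paper obtains this in three steps:
\begin{enumerate}
\item It proves the two-point identity $\Res_w w^{\beta-1}\iota_{y,w}H_2(v,b;y,w)=y^{-\wt v}\rho(v*_nb)$. This rests on $\deg S\leq 2n$ and $(1+x)^{-n-1}-K_n(x)\in x^{-2n-2}F[[x^{-1}]]$.
\item It propagates the identity to arbitrary left factors by expanding $H_{q+1}$ in the variables $\xi$ via \eqref{eq:rational-iterate}.
\item It applies $\pi_U(\rho(v*_nb))=\pi_U(\rho(v))\pi_U(\rho(b))$.
\end{enumerate}
Without this step, together with $f(x\,b(s))=0$ for $s\geq\wt b+n$, $\tau$ is not defined on $\overline M_n(U)$. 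Your alternative of writing $u=\rho(c)u_0$ would need exactly this step to be independent of the choice of $c$ and $u_0$.
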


\begin{proof}
For $q\geq2$ and homogeneous $a_i\in V_{\alpha_i}$, define
\begin{equation}
\label{eq:rational-coefficients}
 H_q(a_1,\ldots,a_q;z_1,\ldots,z_q)=
 z_q^{(q-1)n-\alpha_q}\prod_{i<q}z_i^{-\alpha_i-n}
 G_{a_1,\ldots,a_{q-1};a_q}
 \left(\frac{z_1-z_q}{z_q},\ldots,\frac{z_{q-1}-z_q}{z_q}\right).
\end{equation}
Also set $H_1(a;z)=z^{-\wt a}\rho(a)$ and
$H_0=\rho(\mathbf1)$, and extend linearly using the homogeneous
decomposition of each vector. In each $H_q$ below, the vectors
$a_1,\ldots,a_q$ precede the variables $z_1,\ldots,z_q$, with the two
groups of arguments separated by a semicolon.
By definition, for an independent variable $t$,
$H_q(a_1,\ldots,a_q;tz_1,\ldots,tz_q)
=t^{-\sum_i\alpha_i}H_q(a_1,\ldots,a_q;z_1,\ldots,z_q)$.
Locality allows simultaneous permutations of $a_1,\ldots,a_{q-1}$ and
their corresponding variables. Equation~\eqref{eq:multivariable-change-base}
also allows us to interchange $a_{q-1},a_q$ and $z_{q-1},z_q$
simultaneously. Thus $H_q$ is invariant under simultaneous permutations
of the vectors and their corresponding variables.

Set $z_q=t$. Equation~\eqref{eq:multivariable-total-degree} gives
\[
 G(z_1/t-1,\ldots,z_{q-1}/t-1)
 \in t^{-qn}(A\otimes_FF(z_1,\ldots,z_{q-1}))[[t]].
\]
Here replacing $X_i$ by $z_i-t$ preserves the total degree bound:
we may write $G$ as a polynomial divided by a product of powers of the
coordinates and their differences, and after simultaneous scaling the
leading term of each denominator is nonzero in the independent
variables $z_i$. Together with \eqref{eq:rational-coefficients} and
symmetry, this shows that the Laurent expansion of $H_q$ at each
$z_i=0$ contains no powers below $-\alpha_i-n$. Here the expansion at
$z_i=0$ is taken over the field of rational functions in the remaining
variables. Hence, for suitable $N_{ij}\geq0$,
\begin{equation}
\label{eq:rational-pole-bound}
 H_q=
 \frac{P_q(z_1,\ldots,z_q)}
 {\prod_i z_i^{\alpha_i+n}\prod_{i<j}(z_i-z_j)^{N_{ij}}},
 \qquad P_q\in A[z_1,\ldots,z_q].
\end{equation}
The numerator may be chosen homogeneous.

Substituting $z_i=z_q+\xi_i$ and expanding in the order
$\xi_1,\ldots,\xi_{q-1}$ transforms
\eqref{eq:rational-coefficients} into
\[
 z_q^{-\sum_i\alpha_i}
 \rho(Y(a_1,\xi_1/z_q)\cdots Y(a_{q-1},\xi_{q-1}/z_q)a_q).
\]
Use symmetry to place $b$ last and $a$ next to last, with corresponding
variables $y$ and $y+x$, respectively. Expand $Y(a,x/y)b$ term by term,
and then return the vectors and their corresponding variables to their
original positions. Regard $y$ and the remaining variables as parameters
and take the Laurent expansion of the left-hand side below at $x=0$.
In this expansion, $(y+x)^{-k}$ and $(z_i-y-x)^{-k}$ are expanded in
nonnegative powers of $x/y$ and $x/(z_i-y)$, respectively, for $k\geq1$.
Since $a_rb=0$ for all sufficiently large $r$, the right-hand side has
only finitely many negative powers of $x$. With this convention,
\begin{equation}
\label{eq:rational-iterate}
 H_q(\ldots,a,b,\ldots;\ldots,y+x,y,\ldots)
 =\sum_{r\in\mathbb Z}x^{-r-1}
 H_{q-1}(\ldots,a_rb,\ldots;\ldots,y,\ldots).
\end{equation}
Using the weight $\wt a+\wt b-r-1$ of each $a_rb$, the power of $y$
computed from \eqref{eq:rational-coefficients} agrees with that on the
right-hand side. Applying $\partial_{z_i}^{(h)}$ for $i<q$, and then
using symmetry for $i=q$, gives
\[
 H_q(a_1,\ldots,D_V^{(h)}a_i,\ldots,a_q;z_1,\ldots,z_q)
 =\partial_{z_i}^{(h)}H_q(a_1,\ldots,a_q;z_1,\ldots,z_q).
\]
For $q=1$, this is the identity
$\rho(D_V^{(h)}a)=\binom{-\wt a}{h}\rho(a)$.
If some $a_i=\mathbf1$, the vacuum property, the formula
$Y(v,z)\mathbf1=\sum_{h\geq0}z^hD_V^{(h)}v$, and the definition of
$H_q$ allow us to remove that vector and its corresponding variable
from the arguments.

To construct a linear map from the enveloping algebra to $A$, first
define an $F$-linear map $f$ on the tensor algebra of $\mathcal L(V)$.
Until the quotient relations have been checked, let $a(r)$ denote
$t^r\otimes a\in\mathcal L(V)$ and let juxtaposition denote multiplication
in the tensor algebra. For $q\geq1$, set
\[
 f(a_1(r_1)\cdots a_q(r_q))
 =\Res_{z_1,\ldots,z_q}z_1^{r_1}\cdots z_q^{r_q}
 \iota_{z_1,\ldots,z_q}H_q,
 \qquad f(1)=\rho(\mathbf1).
\]
The function $H_q$ is multilinear in the vectors, and each coefficient
extracted is a finite linear combination of elements of $A$. Thus this
rule is well-defined on the tensor algebra.
For a formal Laurent series $B(z)$, a coefficientwise calculation gives
\[
 \begin{aligned}
 \Res_z z^r\partial_z^{(h)}B(z)
 &=\binom{h-r-1}{h}[z^{h-r-1}]B(z)\\
 &=(-1)^h\binom rh\Res_z z^{r-h}B(z).
 \end{aligned}
\]
Consequently, with arbitrary products of modes fixed on the left and right,
\[
 f(\cdots(D_V^{(h)}a)(r)\cdots)
 =(-1)^h\binom rh f(\cdots a(r-h)\cdots).
\]
For the derivative relations in $\mathcal L(V)$, this gives
\[
 \begin{aligned}
 &f\left(\cdots\sum_{j=0}^h\binom rj
             (D_V^{(h-j)}a)(r-j)\cdots\right)\\
 &=\binom rh\sum_{j=0}^h(-1)^{h-j}\binom hj
       f(\cdots a(r-h)\cdots)=0,\qquad h>0.
 \end{aligned}
\]
When adjacent modes are interchanged, the difference between the two
iterated expansions is determined by the poles where the corresponding
variables coincide. Using the finite principal part in
\eqref{eq:rational-iterate} and the integer identity
\[
 (\iota_{z,w}-\iota_{w,z})(z-w)^{-i-1}
 =\partial_w^{(i)}\left(z^{-1}\delta(w/z)\right),
\]
we obtain
\[
 \begin{aligned}
 &f(\cdots a(r)b(s)\cdots)
 -f(\cdots b(s)a(r)\cdots)\\
 &\qquad=\sum_{i\geq0}\binom ri
 f(\cdots(a_i b)(r+s-i)\cdots).
 \end{aligned}
\]
The linear map therefore descends to $f:U(\widehat V)\to A$.
From now on, $a(r)$ again denotes the image of $t^r\otimes a$ in the
enveloping algebra. Homogeneity further gives
$f(U(\widehat V)_d)=0$ for $d\ne0$ and
$f(a(\alpha-1))=\rho(a)$.

Let $\pi_U:A\to\End_F(U)$ be the given action. To define
$\tau(x\otimes u)=\pi_U(f(x))u$, we now verify the defining relations
of the balanced tensor product. The bound in
\eqref{eq:rational-pole-bound} on the order of the pole at zero in the
variable corresponding to the rightmost mode $b(s)$ gives
$f(xb(s))=0$ for $s\geq\wt b+n$.
For the residue calculation below, write
$K_n(z)=\sum_{r=0}^n(-1)^r\binom{n+r}{n}z^{-n-r-1}$.
For a degree-zero mode, first consider the two-point function.
Let $\wt v=h$ and $\wt b=\beta$, and set
$S(x)=(1+x)^{h+n}\rho(Y(v,x)b)$.
Lemma~\ref{lem:multivariable-functions} gives
$S\in A[x,x^{-1}]$ and $\deg S\leq2n$. Setting $\lambda=y/w$, we have
\[
 \begin{aligned}
 \Res_w w^{\beta-1}\iota_{y,w}H_2(v,b;y,w)
 &=y^{-h}[\lambda^0]_{\infty}\lambda^{-n}S(\lambda-1)\\
 &=y^{-h}[x^{-1}]_{\infty}(1+x)^{-n-1}S(x)\\
 &=y^{-h}\rho(v*_nb).
 \end{aligned}
\]
The subscript $\infty$ denotes expansion in negative powers of the variable.
The last step uses
\[
 \begin{aligned}
 (1+x)^{-n-1}-K_n(x)&\in x^{-2n-2}F[[x^{-1}]],\\
 \bigl((1+x)^{-n-1}-K_n(x)\bigr)S(x)&\in x^{-2}A[[x^{-1}]],\\
 [x^{-1}]_\infty(1+x)^{-n-1}S(x)
 &=[x^{-1}]_\infty K_n(x)S(x)=\rho(v*_nb).
 \end{aligned}
\]

For any $q\geq1$ homogeneous vectors $a_i\in V_{\alpha_i}$, write the
corresponding variables as $y+\xi_1,\ldots,y+\xi_{q-1},y$.
Set $\xi=(\xi_1,\ldots,\xi_{q-1})$. For
$j=(j_1,\ldots,j_{q-1})\in\mathbb Z^{q-1}$, write
$\xi^{j}=\prod_{i=1}^{q-1}\xi_i^{j_i}$, and expand
\[
 Y(a_1,\xi_1)\cdots Y(a_{q-1},\xi_{q-1})a_q
 =\sum_{j}v_{j}\xi^{j}.
\]
For $q=1$, the right-hand side consists only of the coefficient $a_1$.
Each nonzero coefficient has weight $\sum_i\alpha_i+\sum_i j_i$.
Successively applying \eqref{eq:rational-iterate} to $H_q$ and to
$H_{q+1}$ obtained by adding $b$ and its variable $w$ at the end gives,
respectively,
\[
 \sum_{j}\xi^{j}
 y^{-\wt v_{j}}\rho(v_{j}),
 \qquad
 \sum_{j}\xi^{j}
 H_2(v_{j},b;y,w).
\]
Every residue in $w$ below is taken by first expanding in Laurent series
at $w=0$ over the field of rational functions in the remaining
coordinates, and then extracting the coefficient of $w^{-1}$.
In the above expansion in $\xi$, the two-point calculation and the
algebra action on $U$ give
\[
 \begin{aligned}
 \Res_w w^{\beta-1}\pi_U H_{q+1}
 &=\sum_j\xi^j y^{-\wt v_j}\pi_U(\rho(v_j*_nb))\\
 &=\sum_j\xi^j y^{-\wt v_j}
       \pi_U(\rho(v_j))\pi_U(\rho(b))\\
 &=(\pi_U H_q)\pi_U(\rho(b)).
 \end{aligned}
\]
Here each denominator factor $y+\xi_i-w$ has constant term $y$ at
$w=\xi=0$, each $\xi_i-\xi_j$ is independent of $w$, and the powers of
$w$ have a uniform lower bound. Thus each fixed coefficient involves
only finitely many terms, and taking coefficients in $w$ is compatible
with the above expansion in $\xi$. Since this expansion map is injective,
the resulting identity is an identity of rational functions.
Taking the coefficient corresponding to the product of modes on the left
then gives
$\pi_U(f(xb(\beta-1)))=\pi_U(f(x))\pi_U(\rho(b))$.
For $x=1$, the same identity follows from $f(b(\beta-1))=\rho(b)$ and
$f(1)=\rho(\mathbf1)$. It therefore holds for every $x\in U(\widehat V)$.
Applying the degree-zero and negative-degree relations above to successive
factors of a product in $U(P_n)$ gives
$\pi_U(f(xp))u=\pi_U(f(x))(pu)$ for $x\in U(\widehat V)$,
$p\in U(P_n)$, and $u\in U$.
Hence $\tau(x\otimes u)=\pi_U(f(x))u$ is well-defined on the balanced
tensor product and satisfies
\[
 \tau(1\otimes u)=\pi_U(\rho(\mathbf1))u=u,\qquad
 \tau(\overline M_n(U)(d))=0\quad(d\neq n).
\]

To prove $\tau(R)=0$, take homogeneous $a,b\in V$ and $u\in U$, and
write $\alpha=\wt a$ and $\beta=\wt b$. For $q\geq0$, take an arbitrary
homogeneous product of modes $X=c_1(r_1)\cdots c_q(r_q)$ of degree $d$,
with $X=1$ when $q=0$.
In $H_{q+2}$, extract the coefficient corresponding to $X$, retaining
the power of the difference of the last two variables in the denominator.
By \eqref{eq:rational-pole-bound}, place the variables corresponding to
$X$ before the last two variables and take the iterated expansion in this
order. Denominators involving differences between these two groups of
variables contribute only nonnegative powers of the last two variables.
After multiplication by
$z_1^{\alpha+n}z_2^{\beta+n}(z_1-z_2)^N$ to clear the denominators,
the resulting expression belongs to $A[[z_1,z_2]]$ and contains only
terms of total degree $2n+N-d$.
If $2n+N-d\geq0$, only finitely many pairs of nonnegative integers have
this sum; if $2n+N-d<0$, the expression is zero.
The numerator is therefore a polynomial, and the resulting function has
the form
\[
 \frac{P_X(z_1,z_2)}
 {z_1^{\alpha+n}z_2^{\beta+n}(z_1-z_2)^N},
 \qquad P_X\in A[z_1,z_2].
\]
Before extracting the coefficient corresponding to $X$, the expansion
after clearing these denominators also contains only nonnegative powers
of $z_1,z_2$. After substituting $z_1=z_0+z_2$, for fixed $i,j\geq0$
the coefficient of $z_0^iz_2^j$ involves only the finitely many terms
$z_1^hz_2^k$ with $h+k=i+j$ and $h\geq i$, each contributing the factor
$\binom hi$. Thus extracting the coefficient corresponding to $X$ is
compatible with this substitution. Multiplication by
$z_2^{-\beta-n}z_0^{-N}$ only shifts the exponents, so this compatibility
is preserved.
Multiplying by $z_1^{\alpha+n}$ and substituting $z_1=z_0+z_2$ gives
the Laurent polynomial
$z_2^{-\beta-n}z_0^{-N}P_X(z_0+z_2,z_2)$.
The iterated expansion in \eqref{eq:rational-iterate}, multiplied by
$(z_2+z_0)^{\alpha+n}$, gives the same Laurent polynomial.
Since both expansions equal this Laurent polynomial,
\[
 \begin{aligned}
 \tau(Xw_{s,t}(a,b;u))
 &=\pi_U\left(\Res_{z_0,z_2}z_0^sz_2^t
 \left(
 \frac{P_X(z_0+z_2,z_2)}{z_2^{\beta+n}z_0^N}
 -\frac{P_X(z_0+z_2,z_2)}{z_2^{\beta+n}z_0^N}
 \right)\right)u\\
 &=0.
 \end{aligned}
\]
Every element of the enveloping algebra is a finite linear combination
of products of modes. Hence $\tau(R)=\tau(U(\widehat V)W)=0$.
If $u\in R\cap U$, then $u=\tau(u)=0$. The kernel of the canonical
map above is $R\cap U=0$, so the map is injective.
\end{proof}

For an $A_d(V)$-module $U$, Griffin \citep[Section~4.2]{Griffin26}
states the existence of an admissible $V$-module
\[
 M_d(U)=\bigoplus_{i\geq0}M_d(U)_i,\qquad M_d(U)_d\cong U,
\]
where his $A_d(V)$ corresponds to our $\widetilde A_d(V)$.
We give a detailed proof of these conclusions for the specific quotient
$M_n(U)=\overline M_n(U)/R$ defined above.
Lemma~\ref{lem:induction-coefficient-map} proves that the canonical map
$U\to M_n(U)$ is injective. We next prove $M_n(U)(d)=0$ for $d<0$,
$M_n(U)(n)=U$, and the module Jacobi identity on the quotient.

\begin{theorem}
\label{thm:modified-induction}
For every unital $\widetilde A_n(V)$-module $U$, the quotient $M_n(U)$
is an admissible $V$-module. The canonical map $u\mapsto1\otimes u+R$
gives an isomorphism of $\widetilde A_n(V)$-modules
$U\cong M_n(U)(n)$, and $M_n(U)$ is generated by its degree-$n$ subspace.
\end{theorem}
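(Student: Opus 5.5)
The proof splits into three tasks: the grading and generation statements, the identification of the degree-$n$ subspace, and the module axioms on the quotient.

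\emph{Grading and generation.} Since $R$ is graded, $M_n(U)=\bigoplus_d M_n(U)(d)$ with $a(r)M_n(U)(d)\subseteq M_n(U)(d+\wt a-r-1)$. Generation by the degree-$n$ subspace is immediate, because $\overline M_n(U)=U(\widehat V)U$.

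\emph{The degree-$n$ subspace.} By PBW with respect to the decomposition $\widehat V=\widehat V_{>0}\oplus\bigoplus_{-n\leq d<0}\widehat V_d\oplus P_n$, we have $\overline M_n(U)=U(\widehat V_{\geq -n,\neq0})\otimes U$. Its degree-$n$ part is spanned by $U$ together with vectors $xy\,u$ where $x$ has positive degree $k$ and $y$ has degree $-k$ with $k\leq n$. This is where $\tau$ enters. Let $\tau$ be the map of Lemma~\ref{lem:induction-coefficient-map}. For any $v\in\overline M_n(U)(n)$, set $u=\tau(v)$; we want $v-u\in R$. Then the canonical map is onto $M_n(U)(n)$, and it is injective by the same lemma. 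It is $\widetilde A_n(V)$-linear because $o(a)=a(\wt a-1)$ acts on $1\otimes u$ through \eqref{eq:degree-zero-map}.

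To prove $v-u\in R$, I would first show that in $M_n(U)$ every degree-zero product of modes acting on $U$ reduces to the zero-mode action of an element of $V$. Weak associativity, i.e.\ the relations $W$ with $s,t$ chosen so that the iterate has degree zero, lets us rewrite a product $a(p)b(q)u$ of total degree zero as a combination of $(a_jb)(\cdot)u$ plus terms $a(\cdot)b(\cdot)u$ in which the right factor has degree $<-n$ and hence kills $u$. The analogous computation expresses $o(a)o(b)u$ as $o(a*_nb)u$. Inducting on the length of the monomial, every element of $\overline M_n(U)(n)$ is congruent mod $R$ to some $1\otimes u'$. Since $\tau$ kills $R$ and is the identity on $U$, we get $u'=\tau(v)$.

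\emph{Module axioms and the main obstacle.} Negative degrees vanish: for $d<0$, the same reduction writes any homogeneous vector as $y\,w$ with $y$ of degree $d-n<-n$ applied to the degree-$n$ part, and such $y$ annihilate $U$. Alternatively, an element of degree $d<0$ is $x\otimes u$ with $x$ of degree $d-n$; PBW ordering puts a factor of degree $<-n$ on the right or leaves a degree-zero part, and the argument iterates. Truncation holds on the generators $U$ and propagates via the commutator formula, so $M_n(U)$ is an admissible $\widehat V$-module with $Y(a,z)$ local. The vacuum property follows since $\mathbf1(r)$ acts as $\delta_{r,-1}$, coming from the derivative relations. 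The Jacobi identity then needs commutator plus associativity on all of $M_n(U)$. Associativity holds on the generating set $U$ by construction of $R$, with the bound $\ell=\wt a+n$ valid because $b(t)u=0$ for $t\geq\beta+n$. I expect the main obstacle to be propagating associativity from $U$ to $U(\widehat V)U$. I would use the standard Dong--Li--Mason/Li lemma that, for a module satisfying the commutator formula and generated by vectors on which weak associativity holds, weak associativity holds everywhere. The proof of that lemma passes through the commutator formula and formal-variable manipulations valid over $\mathbb Z$, so it transfers to characteristic $p$. I would check that no division by integers occurs, using the divided operators $\partial^{(h)}$ and $D_V^{(h)}$ in place of derivatives.
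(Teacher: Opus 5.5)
Your outline follows the paper's proof. The paper also spans $M_n(U)(d)$ by single modes $c(t)u$ using the relations $w_{s,t}$, gets injectivity from $\tau$ (Lemma~\ref{lem:induction-coefficient-map}), and deduces vanishing in negative degrees from that spanning set. Truncation is proved by induction on the number of factors, and the vacuum property comes from the divided-power relations (in characteristic $p$ one needs $\binom{r+h}{h}\mathbf 1(r)=0$ with $h=p^e>r$, not just $h=1$). The Jacobi identity is proved on $U$ from locality and $W$, then propagated through the commutator formula; that propagation is the lemma you cite, carried out in \eqref{eq:jacobi-propagation}.

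The two-mode reduction, however, is wrong as written, and the repair is the paper's inner induction.
\begin{itemize}
\item In $w_{p-\ell,q}(a,b;u)$ the extra terms are $a(p-i)b(q+i)u$ with $i\geq1$. Their right factor has degree $\deg b(q)-i$. All of these vanish only when $\deg b(q)\leq-n$, so in general they survive.
\item You need to induct on $\max(\wt b+n-q,0)$, simultaneously for all $a$ and $p$. The extra terms have a strictly smaller value of this quantity, while the iterate terms $(a_{p-\ell+j}b)(q+\ell-j)u$ are already single modes.
\item State the reduction for $a(p)b(q)u$ of arbitrary total degree, not just degree zero. The rightmost pair in a degree-zero monomial need not have degree zero, so your induction on length needs the general version. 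It is also what your negative-degree argument tacitly uses when you say ``the same reduction''.
\end{itemize}
Finally, discard the PBW ``alternative'' for $d<0$ and the bound $k\leq n$. In $\overline M_n(U)$, a product of several modes with degrees in $[-n,-1]$ applied to $u$ is generally nonzero by PBW. So negative degrees vanish only modulo $R$, via the single-mode spanning set.
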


\begin{proof}
We first prove truncation for the formal series on $\overline M_n(U)$.
Take homogeneous $x_1,\ldots,x_s\in\widehat V$ and $u\in U$, write
$v=x_1\cdots x_su$, and set
$B=n+\sum_{i=1}^s\max(\deg x_i,0)$.
Induction on the number $s$ of factors gives $\widehat V_dv=0$ for
$d<-B$. The case $s=0$ follows from the action of $P_n$.
For the induction step, write $v=x_1v'$ and set
$B'=n+\sum_{i=2}^s\max(\deg x_i,0)$.
For $y\in\widehat V_d$, the induction hypothesis gives
\[
 \begin{aligned}
 d<-B&=-B'-\max(\deg x_1,0)
 \quad\Longrightarrow\quad
 d<-B',\quad d+\deg x_1<-B',\\
 yv&=x_1yv'+[y,x_1]v'=0.
 \end{aligned}
\]
Here $B=B'+\max(\deg x_1,0)$.
Thus, for homogeneous $a\in V_\alpha$, if $r\geq\alpha+B$, then
$\deg a(r)=\alpha-r-1<-B$, and hence $a(r)v=0$.
In $\widehat V$, the derivative relations and induction on $h\geq1$ give
\[
 \begin{aligned}
 (D_V^{(h)}a)(r)
 &=-\sum_{j=1}^h\binom rj(D_V^{(h-j)}a)(r-j)\\
 &=-\sum_{j=1}^h\binom rj(-1)^{h-j}
       \binom{r-j}{h-j}a(r-h)\\
 &=-\binom rh\sum_{j=1}^h(-1)^{h-j}\binom hj a(r-h)
 =(-1)^h\binom rh a(r-h).
 \end{aligned}
\]
Since $D_V^{(h)}\mathbf1=0$, we have
$\binom{r+h}{h}\mathbf1(r)=0$.
For $r\leq-2$, take $h=-r-1$; for $r\geq0$ in characteristic $p>2$,
take $h=p^e>r$. The corresponding coefficients are $(-1)^h$ and $1$,
respectively. The latter equality follows by comparing the coefficients
of $z^{p^e}$ in $(1+z)^{r+p^e}=(1+z)^r(1+z^{p^e})$.
In characteristic zero, take $h=1$ in the latter case.
Thus $\mathbf1(r)=0$ for $r\neq-1$.
The centrality of $\mathbf1(-1)$ and its action on $U$ give
$\mathbf1(-1)xu=x\mathbf1(-1)u=xu$ for $x\in U(\widehat V)$ and
$u\in U$. Hence
$Y_{\overline M_n(U)}(\mathbf1,z)=\mathrm{id}_{\overline M_n(U)}$.

Lemma~\ref{lem:induction-coefficient-map} gives $R\cap U=0$.
We next determine a spanning set for the degree-$n$ subspace.
For homogeneous $a,b$, write $\alpha=\wt a$ and $\beta=\wt b$, and set
$\ell=\alpha+n$. Taking the parameters $r-\ell,s$ in
\eqref{eq:association-coefficient} gives, in the quotient,
\[
 a(r)b(s)u
 =\sum_{j=0}^{\ell}\binom{\ell}{j}
 (a_{r-\ell+j}b)(s+\ell-j)u
 -\sum_{i\geq1}(-1)^i\binom{r-\ell}{i}a(r-i)b(s+i)u.
\]
Fix homogeneous $b$. We induct on $D=\max(\beta+n-s,0)$ to prove,
simultaneously for all homogeneous $a$, integers $r$, and $u\in U$,
that $a(r)b(s)u$ is a finite linear combination of vectors $c(t)u$
with $c$ homogeneous.
For $D=0$, we have $b(s)u=0$. For $D>0$, each term in the second sum
above has $i\geq1$ and satisfies
$\max(\beta+n-s-i,0)<D$, so the induction hypothesis applies.
This sum is finite by truncation of $b(s+i)u$, and the first sum already
consists of single-mode actions.
Now induct on the number of modes, replacing the action of the rightmost
two modes by the finite sum of single-mode actions just obtained.
This reduces the number of modes in each term by one.
All relations preserve the integer degree. Therefore
\[
 M_n(U)(d)=\operatorname{span}_F
 \{a(\wt a+n-d-1)u:a\in V\text{ homogeneous},\ u\in U\}.
\]
For $d<0$, we have $d-n<-n$, so the action of $P_n$ gives
$a(\wt a+n-d-1)U\subseteq\widehat V_{d-n}U=0$.
Thus $M_n(U)(d)=0$.
Together with $R\cap U=0$ and the unitality of the module $U$, this gives
\[
 M_n(U)(n)
 =\operatorname{span}_F\{o(a)u:a\in V\text{ homogeneous},\ u\in U\}
 =\widetilde A_n(V)U=U.
\]
If a product of modes $Xu$ has degree $d<0$, then
$Xu+R\in M_n(U)(d)=0$, so $Xu\in R$.
Hence $yXu\in U(\widehat V)R=R$ for every $y\in U(\widehat V)$.

It remains to verify the module Jacobi identity on the quotient.
The Lie bracket on $\widehat V$ gives
\[
 [Y(a,z_1),Y(b,z_2)]
 =\sum_{i\geq0}Y(a_i b,z_2)
 \partial_{z_2}^{(i)}\left(z_1^{-1}\delta(z_2/z_1)\right).
\]
The right-hand side is a finite sum, so these formal series satisfy locality.
For $u\in U$, take $\ell=\alpha+n$ and $k=\beta+n$, and choose a
nonnegative integer $N$ such that
$(z_1-z_2)^N[Y(a,z_1),Y(b,z_2)]=0$.
Truncation and locality give
\[
 \begin{aligned}
 P_u(z_1,z_2)
 &:=z_1^\ell z_2^k(z_1-z_2)^NY(a,z_1)Y(b,z_2)u\\
 &=z_1^\ell z_2^k(z_1-z_2)^NY(b,z_2)Y(a,z_1)u
 \in M_n(U)[[z_1,z_2]].
 \end{aligned}
\]
The weak associativity relations defining $W$ further give
\[
 z_0^N(z_2+z_0)^\ell z_2^kY(Y(a,z_0)b,z_2)u
 =P_u(z_2+z_0,z_2).
\]
Expand $P_u$ into monomials. For each monomial $z_1^iz_2^j$, multiply
the formal delta identity
\[
 z_0^{-1}\delta\left(\frac{z_1-z_2}{z_0}\right)
 -z_0^{-1}\delta\left(\frac{z_2-z_1}{-z_0}\right)
 =z_2^{-1}\delta\left(\frac{z_1-z_0}{z_2}\right),
\]
by $z_0^{-N}z_1^{i-\ell}z_2^{j-k}$, where the three differences are
expanded in $z_2/z_1$, $z_1/z_2$, and $z_0/z_1$, respectively.
Multiplying each term by the corresponding vector coefficient of $P_u$
and summing gives the Jacobi identity acting on $u$.
For each fixed coefficient in the three variables, the nonnegative total
degree of the numerator is fixed, so only finitely many terms occur.

Let $B_{a,b}$ be the operator-valued formal series obtained by subtracting
the right-hand side of the Jacobi identity from the left-hand side.
The common kernel of these series means the common kernel of all their
coefficient operators.
The mode commutator formula and the Jacobi identity in $V$ give
\begin{equation}
\label{eq:jacobi-propagation}
 [c(s),B_{a,b}]
 =\sum_{i\geq0}\binom si z_1^{s-i}B_{c_i a,b}
 +\sum_{i\geq0}\binom si z_2^{s-i}B_{a,c_i b}.
\end{equation}
More explicitly, in the iterate term use
\[
 c_jY(a,z_0)b
 =Y(a,z_0)c_jb+\sum_{i=0}^j\binom ji z_0^{j-i}Y(c_i a,z_0)b,
\]
and apply the integer identity
\[
 \begin{aligned}
 \sum_{j\geq i}\binom sj\binom ji z_2^{s-j}z_0^{j-i}
 &=\binom si\sum_{h\geq0}\binom{s-i}{h}z_2^{s-i-h}z_0^h\\
 &=\binom si\iota_{z_2,z_0}(z_2+z_0)^{s-i}.
 \end{aligned}
\]
For negative integers $s$, the right-hand side is expanded in $z_0/z_2$.
When computing the commutator with a mode in the iterate term, for each
fixed coefficient $a_rb$ of $z_0$, truncation gives $c_j(a_rb)=0$ for all
sufficiently large $j$.
If $Y(c_i a,z_0)b\in z_0^{-N_i}V[[z_0]]$, fixing the coefficient of
$z_0^q$ bounds the index in the new expansion by $0,\ldots,q+N_i$.
The delta factor in the iterate term has only nonnegative powers of
$z_0$, so after multiplication by that factor the same bound applies
to the sum of the two expansion indices.
Thus each fixed coefficient involves only finitely many terms, and the
delta identity allows $(z_2+z_0)^{s-i}$ to be replaced by $z_1^{s-i}$.
The two final sums over $i$ are finite by truncation of $c_i a$ and
$c_i b$. Hence \eqref{eq:jacobi-propagation} holds for every integer $s$.

The common kernel of all $B_{a,b}$ contains $U$.
If $v$ belongs to this common kernel, then for each mode $c(s)$,
equation~\eqref{eq:jacobi-propagation} gives
\[
 \begin{aligned}
 B_{a,b}c(s)v
 &=c(s)B_{a,b}v-[c(s),B_{a,b}]v\\
 &=-\sum_{i\geq0}\binom si z_1^{s-i}B_{c_i a,b}v
   -\sum_{i\geq0}\binom si z_2^{s-i}B_{a,c_i b}v=0.
 \end{aligned}
\]
Thus the common kernel contains $U$ and is stable under every mode.
Since $M_n(U)=U(\widehat V)U$, it equals $M_n(U)$.
Together with the vacuum property, truncation, and the nonnegative
integer grading, this proves that $M_n(U)$ is admissible.
\end{proof}

Using the canonical isomorphism in Theorem~\ref{thm:modified-induction},
we henceforth regard $U$ as the degree-$n$ subspace of $M_n(U)$.
Under this isomorphism, the zero-mode action agrees with the given action
on $U$. Thus
\begin{equation}
\label{eq:standard-induction-generation}
 M_n(U)(n)=U,\qquad M_n(U)=U(\widehat V)U.
\end{equation}

For the induced module constructed from the enveloping algebra, Griffin
\citep[Construction~4.6 and Lemma~4.7]{Griffin26} takes the quotient
by its largest graded submodule with zero degree-$n$ subspace and proves
that every nonzero graded submodule of the quotient has nonzero
intersection with the degree-$n$ subspace.
For the module $M_n(U)$ constructed here, we describe the corresponding
submodule using projection onto degree $n$. This description will be
used below to verify that the induced homomorphisms are well-defined on
the quotients.
Let $p_n:M_n(U)\to M_n(U)(n)=U$ be the projection associated with
the given direct sum grading, and let $U^*=\Hom_F(U,F)$ be the algebraic
dual. For $u^*\in U^*$ and $v\in M_n(U)$, define the pairing
$\langle u^*,v\rangle_U=u^*(p_nv)$, and set
\begin{equation}
\label{eq:pairing-submodule}
 \overline J=
 \{v\in M_n(U)\mid
 \langle u^*,xv\rangle_U=0\text{ for all }u^*\in U^*,\ x\in U(\widehat V)\}.
\end{equation}
The algebraic dual separates points, so the condition is equivalent to
$p_n(xv)=0$ for every $x\in U(\widehat V)$.

This subspace is the largest graded $V$-submodule of $M_n(U)$ whose
degree-$n$ subspace is zero. Indeed, if $v\in\overline J$, then
$\langle u^*,xyv\rangle_U=0$ for all $x,y\in U(\widehat V)$ and
$u^*\in U^*$, so $yv\in\overline J$. Write
$v=\sum_dv_d\in\overline J$. For fixed $d$ and
$x\in U(\widehat V)_{n-d}$, we have $p_n(xv)=p_n(xv_d)$.
For homogeneous $x$ of any other degree, $p_n(xv_d)=0$.
Thus each $v_d\in\overline J$, proving that this submodule is graded.
Taking the identity element $x=1$ on the degree-$n$ subspace gives
$\overline J(n)=0$. If $N$ is a graded submodule with $N(n)=0$, then
$p_n(xv)=0$ for every $v\in N$ and every $x$, so $N\subseteq\overline J$.
We therefore define the quotient module
\begin{equation}
\label{eq:standard-induced-quotient}
 L_n(U)=M_n(U)/\overline J,\qquad
 L_n(U)(n)=U,\qquad L_n(U)=U(\widehat V)U,
\end{equation}
where the equalities for the degree-$n$ subspace and generation follow
from \eqref{eq:standard-induction-generation} and $\overline J(n)=0$.

For a nonzero homogeneous vector $\xi\in L_n(U)(d)$, there exists
$\widetilde\xi\in M_n(U)(d)$ such that
$\widetilde\xi+\overline J=\xi$. Since
$\widetilde\xi\notin\overline J$, there is $x\in U(\widehat V)$ such that
$p_n(x\widetilde\xi)\neq0$.
Only the component of degree $n-d$ in the finite homogeneous
decomposition of $x$ can contribute to this projection, so we may take
$x\in U(\widehat V)_{n-d}$. Then
$0\neq x\widetilde\xi=p_n(x\widetilde\xi)\in U$, and $x\xi\neq0$
because $\overline J(n)=0$. Consequently,
\begin{equation}
\label{eq:induced-nonzero}
 0\neq\xi\in L_n(U)(d)
 \quad\Longrightarrow\quad
 \text{there exists $x\in U(\widehat V)_{n-d}$ such that $0\neq x\xi\in U$}.
\end{equation}
Choosing a nonzero homogeneous vector in any nonzero graded submodule
$N\subseteq L_n(U)$ and applying \eqref{eq:induced-nonzero} gives
$N(n)\neq0$.

\section{The structure of induced modules}
\label{sec:induced-structure}

\subsection{The kernel of the canonical map and the graded structure of induced modules}

For $n\geq1$, set $I_n=\widetilde O_{n-1}(V)/\widetilde O_n(V)$,
the kernel of the surjection between consecutive algebras in
Proposition~\ref{prop:modified-zhu-algebra}. A $\widetilde A_n(V)$-module
is said to factor through $\widetilde A_{n-1}(V)$ when $I_n$ acts
trivially on it.
For any unital $\widetilde A_n(V)$-module $U$, define
$\Ann_U(I_n)=\{u\in U:I_nu=0\}$. Since $I_n$ is a two-sided ideal,
$I_nu=0$ implies $I_n(au)=(I_na)u=0$, so $\Ann_U(I_n)$ is a submodule.
It contains every submodule annihilated by $I_n$ and is therefore the
largest submodule that factors through the algebra at the preceding
level. In particular, every nonzero submodule of $U$ does not factor
through that algebra if and only if $\Ann_U(I_n)=0$.
All modules over associative algebras in this paper are unital.

For a weak $V$-module $M$ and $w\in M$, the weak submodule generated by
$w$ is $\operatorname{span}_F\{a_jw\mid a\in V,\ j\in\mathbb Z\}$
\citep[Lemma~2.1]{Ren17}. Suppose that $M$ is admissible and $w\in M(d)$ is
homogeneous. For any $q\in\mathbb Z$, decomposing each $a$ into its
finitely many homogeneous components and then taking the component of
degree $d+q$ gives
\begin{equation}
\label{eq:single-mode-degree}
 U(\widehat V)_q w
 =\operatorname{span}_F
 \{a_jw\mid a\in V\text{ homogeneous},\ j\in\mathbb Z,\ \wt a-j-1=q\}.
\end{equation}
The left-hand side is the linear span of all $xw$ with
$x\in U(\widehat V)_q$. Enveloping algebra elements of degree $q$ send
$w$ into $M(d+q)$, and every vector on the right is obtained by applying
a single mode of degree $q$. This proves the equality of the two spaces.

For any unital $\widetilde A_n(V)$-module $U$, put $L=L_n(U)$.
Using the canonical $\widetilde A_n(V)$-module isomorphism $U\cong L(n)$, define
\[
 \iota_U:U\longrightarrow\Omega_n(L)/\Omega_{n-1}(L),\qquad
 u\longmapsto u+\Omega_{n-1}(L).
\]
The nonnegative grading gives $U\subseteq\Omega_n(L)$.
The space $\Omega_{n-1}(L)$ is stable under zero modes. Pulling back its
action along the natural surjection
$\widetilde A_n(V)\to\widetilde A_{n-1}(V)$ makes it a
$\widetilde A_n(V)$-module. This action is still given by zero modes,
so it agrees with the restriction of the action on $\Omega_n(L)$.
The quotient above therefore has a well-defined
$\widetilde A_n(V)$-module structure. Since $\iota_U$ commutes with
every zero mode, it is a $\widetilde A_n(V)$-module homomorphism.
We now determine its kernel.

\begin{lemma}
\label{lem:canonical-kernel}
Let $n\geq1$, let $U$ be a nonzero unital $\widetilde A_n(V)$-module,
and put $L=L_n(U)$. Then
\begin{equation}
\label{eq:canonical-kernel-annihilator}
 \ker\iota_U
 =U\cap\Omega_{n-1}(L)
 =\Ann_U(I_n).
\end{equation}
Consequently, $\iota_U$ is injective if and only if every nonzero
$\widetilde A_n(V)$-submodule of $U$ does not factor through
$\widetilde A_{n-1}(V)$.
\end{lemma}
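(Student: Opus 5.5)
The first equality is immediate from the definition of $\iota_U$: $u\in\ker\iota_U$ exactly when $u\in\Omega_{n-1}(L)$, and $U=L(n)\subseteq\Omega_n(L)$. So the work is the equality $U\cap\Omega_{n-1}(L)=\Ann_U(I_n)$, which I will prove by two inclusions. The final sentence of the lemma then follows from the preceding discussion: $\Ann_U(I_n)$ is the largest submodule factoring through $\widetilde A_{n-1}(V)$.

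\emph{Inclusion $U\cap\Omega_{n-1}(L)\subseteq\Ann_U(I_n)$.} By Proposition~\ref{prop:modified-zhu-algebra} applied at level $n-1$, the space $\Omega_{n-1}(L)$ is a $\widetilde A_{n-1}(V)$-module via zero modes. Hence $o(c)w=0$ for every $c\in\widetilde O_{n-1}(V)$ and $w\in\Omega_{n-1}(L)$. On $U=L(n)$ the action of $c+\widetilde O_n(V)$ is $o(c)$, so every element of $I_n=\widetilde O_{n-1}(V)/\widetilde O_n(V)$ kills $U\cap\Omega_{n-1}(L)$.

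\emph{Inclusion $\Ann_U(I_n)\subseteq\Omega_{n-1}(L)$.} Let $u\in U$ with $I_nu=0$. Suppose, for contradiction, that $a_ju\neq0$ for some homogeneous $a$ with $\deg a_j=-n$, i.e.\ $\xi:=a_ju\in L(0)$ is nonzero. Modes of degree below $-n$ already kill $u$ because $L(d)=0$ for $d<0$; so this is the only way $u$ can fail to lie in $\Omega_{n-1}(L)$. By \eqref{eq:induced-nonzero} there is $x\in U(\widehat V)_n$ with $0\neq x\xi\in U$. Using \eqref{eq:single-mode-degree} applied to $\xi\in L(0)$, I may take $x$ to be a single mode $b_i$ of degree $n$. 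Then $b_ia_j$ acts on $U$ as an element of $\varphi_L(U(\widehat V)_n)\varphi_L(U(\widehat V)_{-n})$.

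The key step is to show that such an operator $b_ia_j|_U$ is the zero-mode action of some element of $\widetilde O_{n-1}(V)$. Then $I_nu=0$ gives $b_ia_ju=0$, the desired contradiction. I would produce this element from the zero-mode formula \eqref{eq:zero-mode-formula} at level $m=n-1$, with the roles reversed. Take $a\circ_{n-1,t}^{s}b$-type elements, with $s,t$ chosen so that the degree-$n$ factor in the first sum is $b_i$ and the degree-$(-n)$ factor is $a_j$; this needs $t=s$ and $\ell=0$, with vectors $b,a$ in place of $a,b$. Acting on $u\in L(n)$, all terms with $\ell>0$ involve right factors of degree $<-n$, which vanish on $U$. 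In the second sum, the right factor has degree $-(m+1+s+\ell)=-(n+s+\ell)$, which is at most $-n$, with equality only for $s=\ell=0$. Choosing $s=t=0$ therefore leaves exactly two surviving terms: $b_{\cdots}a_{\cdots}u$ with coefficient $1$, and a second product of the same shape with the two vectors interchanged. I would then handle this second term by symmetry or by an induction, for instance using skew-symmetry modulo $\mathcal T$ or the companion element with vectors swapped. The aim is to isolate $b_ia_j|_U=o(c)|_U$ for some $c\in\widetilde O_{n-1}(V)$.

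I expect this extraction to be the main obstacle, specifically controlling the coefficients $\binom{-N}{0}=1$ and the sign $(-1)^N$ so that the two leading terms do not cancel. If isolating a single product proves awkward, an alternative suffices. Whenever $\xi=a_ju\neq0$, the vectors $\sum b_ia_ju$ obtained from the two-term combination already give some nonzero element of $I_nu$. This follows because every nonzero vector of $L(0)$ is detected by degree-$n$ modes, applied to the span of all such $\xi$ simultaneously.
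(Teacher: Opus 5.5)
Your overall plan matches the paper's. The first equality and the inclusion $U\cap\Omega_{n-1}(L)\subseteq\Ann_U(I_n)$ are handled exactly as there. For the converse, you correctly reduce to ruling out a nonzero $\xi=a_{\alpha+n-1}u\in L(0)$ via \eqref{eq:induced-nonzero} and \eqref{eq:single-mode-degree}.

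\textbf{The gap is the key extraction step, and it comes from your choice $s=t=0$.} With that choice, restrict \eqref{eq:zero-mode-formula} to $L(n)$, taking the vectors in the order $b,a$, with $m=n-1$ and $N=2n$. This gives
\[
 o(b\circ_{n-1,0}^{0}a)\big|_{L(n)}
 =b_{\beta-n-1}a_{\alpha+n-1}-a_{\alpha-n-1}b_{\beta+n-1},
\]
where the second coefficient is $-(-1)^{2n}=-1$. So $I_nu=0$ only tells you that $\phi(a,b)=b_{\beta-n-1}a_{\alpha+n-1}u$ is symmetric in $a$ and $b$. Neither of your proposed repairs changes this. Swapping the vectors produces the negative of the same operator, so it adds no information. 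Your closing ``alternative'' does not help either: the detection property \eqref{eq:induced-nonzero} says nothing against a nonzero symmetric $\phi$. Hence no element of $I_nu$ is actually shown to be nonzero, and the contradiction is never reached.

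\textbf{The missing idea is a parameter choice that kills one whole sum on $L(n)$ for degree reasons.} You already made the needed observation: in the second sum the right factor has degree $-(n+s+\ell)$. Take $s=t=1$ (vectors $b,a$, $N=2n+1$). Then the entire second sum vanishes on $L(n)$, and in the first sum only $\ell=0$ survives. This gives
\[
 o(b\circ_{n-1,1}^{1}a)\big|_{L(n)}=b_{\beta-n-1}a_{\alpha+n-1}\big|_{L(n)},
\]
with coefficient $\binom{-N}{0}=1$.

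The paper does the mirror image, using $a\circ_{n-1,1}^{0}b$, also with $N=2n+1$. There the first sum dies, because its right factors have degree at most $-(n+1)$. The $\ell=0$ term of the second sum survives with coefficient $-(-1)^{2n+1}=1$.

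With either choice, $b_{\beta-n-1}a_{\alpha+n-1}u=0$ for every homogeneous $b$. Hence $U(\widehat V)_n\xi=0$ by \eqref{eq:single-mode-degree}, and \eqref{eq:induced-nonzero} forces $\xi=0$.
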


\begin{proof}
By the definition of $\iota_U$,
$\ker\iota_U=\{u\in U:u+\Omega_{n-1}(L)=0\}=U\cap\Omega_{n-1}(L)$.
For $u\in U\cap\Omega_{n-1}(L)$ and $c\in\widetilde O_{n-1}(V)$,
the zero-mode action gives $(c+\widetilde O_n(V))u=o(c)u=0$, so $I_nu=0$.
Hence $U\cap\Omega_{n-1}(L)\subseteq\Ann_U(I_n)$.

Conversely, let $u\in\Ann_U(I_n)$. For homogeneous $a,b\in V$, write
$\alpha=\wt a$ and $\beta=\wt b$. Take $m=n-1$, $t=1$, and $s=0$
in \eqref{eq:zero-mode-formula}, so that $N=2n+1$.
The nonnegative grading gives
\[
 \begin{aligned}
 b_{\beta+n+\ell}L(n)
 &\subseteq L\bigl(n+\beta-(\beta+n+\ell)-1\bigr)
 =L(-\ell-1)=0 &&(\ell\geq0),\\
 a_{\alpha+n-1+\ell}L(n)
 &\subseteq L\bigl(n+\alpha-(\alpha+n-1+\ell)-1\bigr)
 =L(-\ell)=0 &&(\ell\geq1).
 \end{aligned}
\]
Restricting \eqref{eq:zero-mode-formula} to $L(n)$ and using
$-(-1)^{2n+1}\binom{-2n-1}{0}=1$, we obtain
\begin{equation}
\label{eq:isolated-bottom-product}
 o(a\circ_{n-1,1}^{0}b)\big|_{L(n)}
 =b_{\beta-n-1}a_{\alpha+n-1}\big|_{L(n)}.
\end{equation}
Since $a\circ_{n-1,1}^{0}b\in\widetilde O_{n-1}(V)$ and $I_nu=0$,
equation~\eqref{eq:isolated-bottom-product} gives
$b_{\beta-n-1}a_{\alpha+n-1}u=o(a\circ_{n-1,1}^{0}b)u=0$.
Set $v=a_{\alpha+n-1}u\in L(0)$. Equation~\eqref{eq:single-mode-degree} gives
\[
 U(\widehat V)_nv
 =\operatorname{span}_F\{b_{\wt b-n-1}v:b\in V\text{ homogeneous}\}
 =0.
\]
Equation~\eqref{eq:induced-nonzero} therefore gives $a_{\alpha+n-1}u=v=0$.
For $j\geq\alpha+n$, the nonnegative grading also gives
$a_ju\in L(n+\alpha-j-1)=0$. It follows that $a_ju=0$ for all
$j\geq\wt a+n-1$, so $u\in\Omega_{n-1}(L)$.
This proves $\Ann_U(I_n)\subseteq U\cap\Omega_{n-1}(L)$ and hence
$\ker\iota_U=U\cap\Omega_{n-1}(L)=\Ann_U(I_n)$.

Finally, $\ker\iota_U=\Ann_U(I_n)$ shows that $\iota_U$ is injective
if and only if $\Ann_U(I_n)=0$. By the maximal property of
$\Ann_U(I_n)$ established above, this is equivalent to the condition
that every nonzero $\widetilde A_n(V)$-submodule of $U$ does not
factor through $\widetilde A_{n-1}(V)$.
\end{proof}

The kernel formula in Lemma~\ref{lem:canonical-kernel} and the zero-mode
action formula in Theorem~\ref{thm:zero-mode-action} give the following
equivalent conditions, relating submodules of $U$, graded submodules of
$L_n(U)$, and the subspaces $\Omega_r(L_n(U))$.

\begin{theorem}
\label{thm:omega-quotient}
Let $n\geq1$, let $U$ be a nonzero unital $\widetilde A_n(V)$-module,
and put $L=L_n(U)$. The following conditions are equivalent:
\begin{enumerate}
 \item Every nonzero $\widetilde A_n(V)$-submodule of $U$ does not factor through $\widetilde A_{n-1}(V)$.
 \item Every nonzero graded $V$-submodule $N\subseteq L$ satisfies $N(0)\neq0$.
 \item For every integer $r\geq0$, there is an equality of graded subspaces
 \begin{equation}
 \label{eq:all-omega-layers}
  \Omega_r(L)=\bigoplus_{d=0}^{r}L(d);
 \end{equation}
 \item The canonical map $\iota_U:U\to\Omega_n(L)/\Omega_{n-1}(L)$ is injective
 (equivalently, it is a $\widetilde A_n(V)$-module isomorphism).
 \item $\Omega_0(L)=L(0)$.
\end{enumerate}
\end{theorem}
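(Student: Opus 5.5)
The plan is to close the cycle of implications $(1)\Rightarrow(2)\Rightarrow(3)\Rightarrow(5)\Rightarrow(2)$. The equivalence $(1)\Leftrightarrow(4)$ then comes directly from Lemma~\ref{lem:canonical-kernel}. Throughout I will use two facts. The first is \eqref{eq:single-mode-degree}: the degree-$q$ part of the submodule generated by a homogeneous vector is spanned by single modes of degree $q$ applied to it. The second is property \eqref{eq:induced-nonzero}, which gives $N(n)\neq0$ for every nonzero graded submodule $N\subseteq L$.

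\emph{$(1)\Rightarrow(2)$.} Suppose $N\subseteq L$ is a nonzero graded submodule with $N(0)=0$. Take $u\in N(n)$; this space is nonzero by \eqref{eq:induced-nonzero}. A homogeneous mode $a_j$ with $j\geq\wt a+n-1$ has degree at most $-n$, so it sends $u$ into $N(0)=0$ or into a negative degree, which is zero. Hence $u\in U\cap\Omega_{n-1}(L)$. By Lemma~\ref{lem:canonical-kernel} this gives $0\neq N(n)\subseteq\Ann_U(I_n)$, contradicting (1).

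\emph{$(2)\Rightarrow(3)$.} The inclusion $\bigoplus_{d\leq r}L(d)\subseteq\Omega_r(L)$ follows from the nonnegative grading. For the reverse inclusion, recall that $\Omega_r(L)$ is graded, as shown in the preliminaries, so it suffices to treat a nonzero homogeneous $\xi\in\Omega_r(L)\cap L(d)$ with $d>r$. Let $N$ be the weak submodule generated by $\xi$; it is graded because $\xi$ is homogeneous. By \eqref{eq:single-mode-degree}, $N(0)$ is spanned by the modes of degree $-d<-r$ applied to $\xi$, and these all vanish because $\xi\in\Omega_r(L)$. So $N(0)=0$ while $N\neq0$, contradicting (2).

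\emph{$(3)\Rightarrow(5)$.} This is the case $r=0$.

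\emph{$(5)\Rightarrow(2)$.} Let $N\neq0$ be a graded submodule, let $e$ be the smallest degree with $N(e)\neq0$, and take $0\neq w\in N(e)$. Every mode of negative degree sends $w$ into $N$ in a degree below $e$, hence kills $w$. Thus $w\in\Omega_0(L)=L(0)$, so $e=0$ and $N(0)\neq0$.

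\emph{$(4)$ and the isomorphism.} Lemma~\ref{lem:canonical-kernel} gives $(1)\Leftrightarrow(4)$ for injectivity. For surjectivity under these conditions, (3) gives $\Omega_n(L)=\bigoplus_{d\leq n}L(d)$ and $\Omega_{n-1}(L)=\bigoplus_{d\leq n-1}L(d)$. The quotient is therefore identified with $L(n)=U$, and $\iota_U$ is the identity under this identification, hence an isomorphism. Conversely, an isomorphism is in particular injective.

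Almost nothing here is hard once Lemma~\ref{lem:canonical-kernel} and \eqref{eq:induced-nonzero} are available. The step needing the most care is $(2)\Rightarrow(3)$, where I must confirm that the submodule generated by a homogeneous vector is graded and that its degree-$0$ part is exactly the span in \eqref{eq:single-mode-degree}. I would also double-check that Theorem~\ref{thm:zero-mode-action} is not needed beyond its use in Lemma~\ref{lem:canonical-kernel}.
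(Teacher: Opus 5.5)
Your argument is correct and follows the paper's proof step for step, except in $(1)\Rightarrow(2)$: there you note $N(n)\subseteq U\cap\Omega_{n-1}(L)$ and invoke the easy inclusion of Lemma~\ref{lem:canonical-kernel} (that is, the $\widetilde A_{n-1}(V)$-module structure of $\Omega_{n-1}(L)$), whereas the paper applies Theorem~\ref{thm:zero-mode-action} with $m=n-1$ directly to $N(n)$, which is the same fact in different packaging. Note that your announced scheme $(1)\Rightarrow(2)\Rightarrow(3)\Rightarrow(5)\Rightarrow(2)$ together with $(1)\Leftrightarrow(4)$ does not by itself lead back to $(1)$; the return path is the $(3)\Rightarrow(4)$ argument in your final paragraph, which combined with Lemma~\ref{lem:canonical-kernel} gives $(4)\Rightarrow(1)$, exactly as in the paper.
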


\begin{proof}
\textup{(1)$\Rightarrow$(2)}.
Let $N\subseteq L$ be a nonzero graded submodule.
Equation~\eqref{eq:induced-nonzero} gives $N(n)\neq0$. Zero modes
preserve both $N$ and the given grading, so $N(n)$ is a nonzero
$\widetilde A_n(V)$-submodule of $U$. If $N(0)=0$, then, for all $k\geq n$,
\[
 \begin{aligned}
 U(\widehat V)_{-k}N(n)&\subseteq N(n-k)=0,\\
 \varphi_L\bigl(U(\widehat V)_k\bigr)
 \varphi_L\bigl(U(\widehat V)_{-k}\bigr)N(n)&=0.
 \end{aligned}
\]
Apply Theorem~\ref{thm:zero-mode-action} with $m=n-1$.
For each $c\in\widetilde O_{n-1}(V)$ and $u\in N(n)$,
equation~\eqref{eq:MF} expresses $o(c)u$ as $\sum_{k\geq n}T_ku$,
where $T_k\in\varphi_L(U(\widehat V)_k)\varphi_L(U(\widehat V)_{-k})$
and only finitely many terms can be nonzero for fixed $u$.
Thus $o(c)u=\sum_{k\geq n}T_ku=0$, and hence
$I_nN(n)=o(\widetilde O_{n-1}(V))N(n)=0$.
It follows that $N(n)$ factors through $\widetilde A_{n-1}(V)$,
contrary to condition~\textup{(1)}.

\textup{(2)$\Rightarrow$(3)}.
For $0\leq d\leq r$ and $\wt a-j-1<-r$, the nonnegative grading gives
$a_jL(d)\subseteq L(d+\wt a-j-1)=0$. Hence
$\bigoplus_{d=0}^{r}L(d)\subseteq\Omega_r(L)$.
Suppose that $0\neq\xi\in\Omega_r(L)\cap L(d)$ with $d>r$.
The submodule $N=U(\widehat V)\xi$ generated by $\xi$ is nonzero and
graded. Since $-d<-r$, equation~\eqref{eq:single-mode-degree} and the
definition of $\Omega_r(L)$ give
\[
 N(0)=U(\widehat V)_{-d}\xi
 =\operatorname{span}_F
 \{a_{\wt a+d-1}\xi\mid a\in V\text{ homogeneous}\}=0.
\]
This contradicts condition~\textup{(2)}. Since $\Omega_r(L)$ is a
graded subspace, equation~\eqref{eq:all-omega-layers} follows.

\textup{(3)$\Rightarrow$(4)}.
Taking $r=n$ and $r=n-1$, respectively, gives
\[
 \frac{\Omega_n(L)}{\Omega_{n-1}(L)}
 =\frac{\bigoplus_{d=0}^{n}L(d)}{\bigoplus_{d=0}^{n-1}L(d)}
 \cong L(n)=U.
\]
The inverse of this isomorphism is $u\mapsto u+\Omega_{n-1}(L)$,
namely $\iota_U$. Since $\iota_U$ is a $\widetilde A_n(V)$-module
homomorphism, it is a module isomorphism and, in particular, is injective.

\textup{(4)$\Rightarrow$(1)}.
If $\iota_U$ is injective, then $\ker\iota_U=0$.
Equation~\eqref{eq:canonical-kernel-annihilator} gives $\Ann_U(I_n)=0$,
and condition~\textup{(1)} follows.

\textup{(3)$\Rightarrow$(5)$\Rightarrow$(2)}.
Taking $r=0$ in condition~\textup{(3)} gives condition~\textup{(5)}.
Conversely, suppose that condition~\textup{(5)} holds. If a nonzero
graded submodule $N$ satisfies $N(0)=0$, choose the least $d>0$ with
$N(d)\neq0$. For $\wt a-j-1<0$, the minimality of $d$ and the
nonnegative grading give $a_jN(d)\subseteq N(d+\wt a-j-1)=0$.
Condition~\textup{(5)} therefore gives
\[
 0\neq N(d)\subseteq\Omega_0(L)\cap L(d)=L(0)\cap L(d)=0,
\]
a contradiction. Thus condition~\textup{(2)} holds.
\end{proof}

\subsection{Homomorphisms of induced modules and nonsplit extensions}

For the induction functor over the complex numbers, see Dong, Li, and
Mason \citep{DLM98}. Using the induction construction above and the
pairing in \eqref{eq:pairing-submodule}, we now verify naturality of the
induced modules with respect to homomorphisms over the present field.

For admissible graded $V$-modules $M,N$, let
$\Hom_V^{\mathrm{gr}}(M,N)$ denote the $F$-vector space of
grading-preserving $V$-module homomorphisms $f:M\to N$, that is,
homomorphisms satisfying $f(M(d))\subseteq N(d)$ for every integer $d\geq0$.

\begin{proposition}\label{prop:induction-naturality}
Let $n\geq1$, and let $U,U'$ be unital $\widetilde A_n(V)$-modules.
Restriction to degree $n$ gives a canonical $F$-linear isomorphism
\[
 \Hom_V^{\mathrm{gr}}(L_n(U),L_n(U'))
 \xrightarrow{\sim}\Hom_{\widetilde A_n(V)}(U,U').
\]
Write $f\mapsto L_n(f)$ for its inverse. This assignment respects
identity homomorphisms and composition, and it preserves injective and
surjective homomorphisms. For every $u\in U$, the map induced by $L_n(f)$
from $\Omega_n(L_n(U))/\Omega_{n-1}(L_n(U))$ to
$\Omega_n(L_n(U'))/\Omega_{n-1}(L_n(U'))$ sends $\iota_U(u)$ to
$\iota_{U'}(f(u))$.
\end{proposition}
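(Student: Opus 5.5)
The plan is to establish injectivity and surjectivity of the restriction map separately, and then read off the functorial properties from uniqueness.

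\emph{Well-definedness and injectivity.} A graded homomorphism $g:L_n(U)\to L_n(U')$ sends $L_n(U)(n)=U$ into $L_n(U')(n)=U'$. It commutes with every zero mode, hence with the action of $\widetilde A_n(V)$ given by Proposition~\ref{prop:modified-zhu-algebra}. So the restriction $g|_U$ lies in $\Hom_{\widetilde A_n(V)}(U,U')$. Injectivity follows from generation: $L_n(U)=U(\widehat V)U$ by \eqref{eq:standard-induced-quotient}, and $g$ commutes with $\varphi$. Hence $g$ is determined by $g|_U$.

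\emph{Surjectivity (construction of $L_n(f)$).} Given $f:U\to U'$, first define $\overline M_n(f)=\operatorname{id}\otimes f:\overline M_n(U)\to\overline M_n(U')$. This is well defined because $f$ is $U(P_n)$-linear: $P_n$ acts on $U$ through \eqref{eq:degree-zero-map} and by zero in negative degrees. The map is $U(\widehat V)$-linear and graded, and it sends each relation vector $w_{s,t}(a,b;u)$ to $w_{s,t}(a,b;f(u))$, because \eqref{eq:association-coefficient} is linear in $u$ with coefficients in $U(\widehat V)$. Hence it maps $R$ into $R'$ and descends to $M_n(f):M_n(U)\to M_n(U')$. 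Next we must check that $M_n(f)(\overline J)\subseteq\overline J'$. For $v\in\overline J$ and $x\in U(\widehat V)$, we have $p_n'(xM_n(f)v)=p_n'(M_n(f)(xv))=f(p_n(xv))=0$, since $M_n(f)$ is graded and equals $f$ in degree $n$. So we obtain a graded homomorphism $L_n(f)$ restricting to $f$. The main care point is this compatibility with $\overline J$ via the projection description \eqref{eq:pairing-submodule}. It goes through because graded maps commute with $p_n$.

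\emph{Functoriality and exactness properties.} Identities and composition are respected by uniqueness: $L_n(f'\circ f)$ and $L_n(f')\circ L_n(f)$ agree on $U$, and injectivity of restriction then forces them to be equal. Surjectivity is preserved because the image of $L_n(f)$ is a submodule containing $f(U)=U'$, and $L_n(U')=U(\widehat V)U'$. For injectivity, suppose $f$ is injective. Then $\ker L_n(f)$ is a graded submodule of $L_n(U)$ whose degree-$n$ part is $\ker f=0$. By the remark after \eqref{eq:induced-nonzero}, every nonzero graded submodule meets degree $n$ nontrivially, so the kernel is zero.

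\emph{Compatibility with $\iota$.} Any $V$-module homomorphism preserves each $\Omega_r$, since it commutes with modes. Hence $L_n(f)$ induces a map $\Omega_n/\Omega_{n-1}\to\Omega_n'/\Omega_{n-1}'$, and $u+\Omega_{n-1}(L_n(U))\mapsto f(u)+\Omega_{n-1}(L_n(U'))$, which is exactly $\iota_{U'}(f(u))$.
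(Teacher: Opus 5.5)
Your proposal is correct and follows essentially the same route as the paper. You build $\overline M_n(f)$ from the $U(P_n)$-linearity of $f$, descend through $R$ and then through $\overline J$ using $p_n'M_n(f)=fp_n$, and get uniqueness, functoriality, preservation of injectivity and surjectivity, and the $\iota$-compatibility from generation by the degree-$n$ subspace and \eqref{eq:induced-nonzero}. The only cosmetic difference is that you verify $M_n(f)(\overline J)\subseteq\overline J'$ directly with the projection $p_n'$ rather than through the dual pairing, which the paper itself notes is an equivalent description.
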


\begin{proof}
Let $f:U\to U'$ be a $\widetilde A_n(V)$-module homomorphism.
The degree-zero part of $P_n$ acts through $\widetilde A_n(V)$, and its
negative-degree part acts trivially on both modules. Hence $f$ is a
$P_n$-module homomorphism and, by the universal property of the
enveloping algebra, a $U(P_n)$-module homomorphism. Thus, for
$x\in U(\widehat V)$, $p\in U(P_n)$, and $u\in U$,
we have $xp\otimes f(u)=x\otimes pf(u)=x\otimes f(pu)$.
The balancing relation therefore gives a well-defined graded
$U(\widehat V)$-module homomorphism
\[
 \overline M_n(f):\overline M_n(U)\longrightarrow\overline M_n(U'),
 \qquad x\otimes u\longmapsto x\otimes f(u).
\]
Let $W'$ denote the relation subspace associated with $U'$.
For the coefficients defining $W$, we have
\[
 \begin{aligned}
 \overline M_n(f)w_{s,t}(a,b;u)&=w_{s,t}(a,b;f(u))\in W',\\
 \overline M_n(f)\bigl(U(\widehat V)W\bigr)
 &=U(\widehat V)\overline M_n(f)(W)\subseteq U(\widehat V)W'.
 \end{aligned}
\]
It therefore descends to a graded $V$-module homomorphism
$M_n(f):M_n(U)\to M_n(U')$. Let $p_n':M_n(U')\to U'$ denote the
projection onto degree $n$. For homogeneous $v\in M_n(U)(d)$, the
grading and the equality $M_n(f)|_U=f$ give
\[
 p_n'M_n(f)v=
 \begin{cases}f(v),&d=n,\\0,&d\neq n,\end{cases}
 =f(p_nv).
\]
By linearity, $p_n'M_n(f)=fp_n$.
Let $f^*:U'^*\to U^*$ be the dual map, given by
$f^*(u'^*)=u'^*\circ f$. For $v\in M_n(U)$ and $u'^*\in U'^*$,
the definition of the pairing gives
\begin{equation}\label{eq:pairing-naturality}
 \begin{aligned}
 \langle u'^*,M_n(f)v\rangle_{U'}
 &=u'^*\bigl(p_n'M_n(f)v\bigr)
 =u'^*\bigl(f(p_nv)\bigr)\\
 &=(f^*u'^*)(p_nv)=\langle f^*u'^*,v\rangle_U.
 \end{aligned}
\end{equation}
Write $\overline J'$ for the submodule defined for $U'$ by
\eqref{eq:pairing-submodule}. If $v\in\overline J$, then
\eqref{eq:pairing-naturality} and the $U(\widehat V)$-linearity of
$M_n(f)$ give, for all $x\in U(\widehat V)$ and $u'^*\in U'^*$,
\[
 \begin{aligned}
 \langle u'^*,xM_n(f)v\rangle_{U'}
 &=\langle u'^*,M_n(f)(xv)\rangle_{U'}\\
 &=\langle f^*u'^*,xv\rangle_U=0.
 \end{aligned}
\]
Thus $M_n(f)(\overline J)\subseteq\overline J'$, so we obtain a graded
homomorphism $L_n(f):L_n(U)\to L_n(U')$ on the quotients in
\eqref{eq:standard-induced-quotient}, explicitly given by
$L_n(f)(v+\overline J)=M_n(f)v+\overline J'$, with $L_n(f)|_U=f$.

For any graded $V$-module homomorphism $h:L_n(U)\to L_n(U')$,
homogeneous $a\in V$, and $u\in U$,
\[
 h\bigl((a+\widetilde O_n(V))u\bigr)
 =h(o(a)u)=o(a)h(u)=(a+\widetilde O_n(V))h(u).
\]
Hence $h|_U$ is $\widetilde A_n(V)$-linear.
If $h_1|_U=h_2|_U$, then
$h_1(xu)=xh_1(u)=xh_2(u)=h_2(xu)$ for $x\in U(\widehat V)$ and $u\in U$.
Since $L_n(U)=U(\widehat V)U$, the extension is unique.
The restriction map is $F$-linear, and the existence and uniqueness just
proved show that it is a linear isomorphism. Identity maps and
composites extend their respective restrictions to degree $n$.
Uniqueness therefore gives
$L_n(\mathrm{id}_U)=\mathrm{id}_{L_n(U)}$ and
$L_n(gf)=L_n(g)L_n(f)$, where $g:U'\to U''$ and $U''$ satisfies the
same assumptions.

If $f$ is injective, then $(\ker L_n(f))(n)=\ker f=0$, and
\eqref{eq:induced-nonzero} gives $\ker L_n(f)=0$.
If $f$ is surjective, then
\[
 L_n(U')=U(\widehat V)U'
 =U(\widehat V)f(U)\subseteq\operatorname{im}L_n(f),
\]
so $L_n(f)$ is surjective.
Finally, for $w\in\Omega_r(L_n(U))$ and $\wt a-j-1<-r$, we have
$a_jL_n(f)w=L_n(f)(a_jw)=0$.
Thus $L_n(f)(\Omega_r(L_n(U)))\subseteq\Omega_r(L_n(U'))$, and
$L_n(f)$ induces the well-defined map
\[
 \begin{aligned}
 \Omega_n(L_n(U))/\Omega_{n-1}(L_n(U))
 &\longrightarrow\Omega_n(L_n(U'))/\Omega_{n-1}(L_n(U')),\\
 w+\Omega_{n-1}(L_n(U))
 &\longmapsto L_n(f)w+\Omega_{n-1}(L_n(U')).
 \end{aligned}
\]
For $u\in U$, this quotient map sends $\iota_U(u)$ to
\[
 L_n(f)u+\Omega_{n-1}(L_n(U'))
 =f(u)+\Omega_{n-1}(L_n(U'))=\iota_{U'}(f(u)).
\]
This proves the asserted compatibility. When every nonzero submodule
of $U$ and every nonzero submodule of $U'$ do not factor through the
algebra at the preceding level, Theorem~\ref{thm:omega-quotient} further
shows that both canonical maps are isomorphisms.
\end{proof}

When every nonzero submodule of $U$ does not factor through the algebra
at the preceding level, Theorem~\ref{thm:omega-quotient} gives a canonical
module isomorphism between $U=L_n(U)(n)$ and
$\Omega_n(L_n(U))/\Omega_{n-1}(L_n(U))$.
Over the complex numbers, Barron, Vander Werf, and Yang
\citep[Proposition~3.9]{BVWY19} prove that if an $A_n(V)$-module $U$ is
indecomposable and does not factor through $A_{n-1}(V)$, then the
standard induced module $L_n(U)$ is graded indecomposable.
Griffin \citep[Lemma~4.8]{Griffin26} proves that if $\mathsf M$ is an
indecomposable $\mathsf A_d(V)$-module, then the induced module
$\Phi_d^{\mathsf L}(\mathsf M)$ constructed from the enveloping algebra
is indecomposable in the category of graded modules.
For the quotient $L_n(U)$ defined here, we use the argument based on
graded generation in the proof of that lemma for the forward implication,
and naturality of induced homomorphisms for the converse.

\begin{corollary}
\label{cor:indecomposable}
Let $n\geq1$, and let $U$ be a nonzero unital $\widetilde A_n(V)$-module.
Then $U$ is indecomposable if and only if $L_n(U)$ is indecomposable in
the category of admissible graded $V$-modules.
\end{corollary}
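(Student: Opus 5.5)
We prove both implications separately, using the facts that $L_n(U)$ is generated by $L_n(U)(n)=U$, that every nonzero graded submodule $N\subseteq L_n(U)$ has $N(n)\neq0$ by \eqref{eq:induced-nonzero}, and the naturality in Proposition~\ref{prop:induction-naturality}.

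\emph{Forward implication.} Suppose $U$ is indecomposable and $L_n(U)=N_1\oplus N_2$ with $N_1,N_2$ graded $V$-submodules. Taking degree-$n$ components gives $U=N_1(n)\oplus N_2(n)$. Zero modes preserve each $N_i$ and the grading, so each $N_i(n)$ is a $\widetilde A_n(V)$-submodule of $U$. By indecomposability one of them, say $N_2(n)$, is zero. Then $N_2$ is a graded submodule with $N_2(n)=0$, so \eqref{eq:induced-nonzero} forces $N_2=0$. Alternatively, $L_n(U)=U(\widehat V)U=U(\widehat V)N_1(n)\subseteq N_1$. Hence $L_n(U)$ is graded indecomposable. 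The module $L_n(U)$ is nonzero because $L_n(U)(n)=U\neq0$.

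\emph{Converse.} Suppose $U=U_1\oplus U_2$ with both $U_i$ nonzero. Use the functor of Proposition~\ref{prop:induction-naturality} with the inclusions $\epsilon_i:U_i\to U$ and projections $\pi_i:U\to U_i$. These satisfy $\pi_i\epsilon_j=\delta_{ij}\mathrm{id}$ and $\epsilon_1\pi_1+\epsilon_2\pi_2=\mathrm{id}_U$. The assignment $f\mapsto L_n(f)$ is the inverse of an $F$-linear restriction isomorphism that respects composition and identities. Therefore $e_i=L_n(\epsilon_i\pi_i)$ are graded endomorphisms of $L_n(U)$ with $e_1+e_2=\mathrm{id}$, $e_i^2=e_i$, and $e_1e_2=0$. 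Linearity holds because restriction to degree $n$ is linear and injective on graded homomorphisms.

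Consequently $L_n(U)=e_1L_n(U)\oplus e_2L_n(U)$ is a decomposition into graded $V$-submodules. Each summand $e_iL_n(U)$ contains $e_i(U)=U_i\neq0$ in degree $n$, so both are nonzero. Thus $L_n(U)$ is decomposable, which proves the converse.

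The only point needing care is the linearity of $f\mapsto L_n(f)$ in the converse. This follows from the uniqueness in Proposition~\ref{prop:induction-naturality}: $L_n(f)+L_n(g)$ is a graded homomorphism restricting to $f+g$, so it equals $L_n(f+g)$. Everything else is a routine verification. One could also note that $L_n(U_i)$ embeds via $L_n(\epsilon_i)$ (injectivity is preserved), but the idempotent argument already suffices.
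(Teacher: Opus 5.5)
Your proof is correct and takes essentially the same route as the paper: the forward direction splits $U$ into the degree-$n$ components and then uses generation by $U$ (or, equivalently, \eqref{eq:induced-nonzero}), and the converse lifts the idempotent projection of $U$ through Proposition~\ref{prop:induction-naturality}. The only difference is cosmetic. The paper uses the single idempotent $L_n(e)$ and the splitting $\operatorname{im}L_n(e)\oplus\ker L_n(e)$, which does not need additivity of $f\mapsto L_n(f)$, whereas you use two complementary idempotents and justify additivity; that justification is valid.
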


\begin{proof}
Suppose that $U$ is indecomposable and that $L_n(U)=W_1\oplus W_2$ is
a graded direct sum decomposition. Zero modes preserve each graded
submodule, so $U=W_1(n)\oplus W_2(n)$ is a direct sum decomposition of
$\widetilde A_n(V)$-modules. Since $U$ is indecomposable, we may assume
$W_1(n)=0$. Let $\pi_1:L_n(U)\to W_1$ be the graded projection.
The generation property in \eqref{eq:standard-induced-quotient} gives
\[
 W_1=\pi_1\bigl(U(\widehat V)U\bigr)
 =U(\widehat V)\pi_1(U)=0.
\]
Thus $L_n(U)$ is graded indecomposable.

Conversely, suppose that $U=U_1\oplus U_2$ with both summands nonzero.
Let $e$ be the idempotent $\widetilde A_n(V)$-module endomorphism
given by projection onto $U_1$.
Proposition~\ref{prop:induction-naturality} gives a graded endomorphism
$L_n(e)$ satisfying $L_n(e)^2=L_n(e^2)=L_n(e)$ and $L_n(e)|_U=e$.
Hence
\[
 \begin{aligned}
 L_n(U)&=\operatorname{im}L_n(e)\oplus\ker L_n(e),\\
 (\operatorname{im}L_n(e))(n)&=\operatorname{im}e=U_1\neq0,\\
 (\ker L_n(e))(n)&=\ker e=U_2\neq0.
 \end{aligned}
\]
This is a graded direct sum decomposition with two nonzero summands,
which proves the converse.
\end{proof}

The following theorem uses induced homomorphisms to construct nonsplit
short exact sequences of admissible graded $V$-modules from nonsplit
extensions satisfying the stated simplicity assumptions. The assumption
on the quotient $T$ is that it is a simple $\widetilde A_n(V)$-module.
If $T$ also factors through $\widetilde A_{n-1}(V)$, the theorem applies
in particular to nonsplit extensions whose submodule $S$ does not factor
through $\widetilde A_{n-1}(V)$ and whose quotient $T$ does.

\begin{theorem}\label{thm:nonsplit-extension}
Let $n\geq1$, and let
\[
 0\longrightarrow S\xrightarrow{\iota}E
 \xrightarrow{\pi}T\longrightarrow0
\]
be a nonsplit short exact sequence of $\widetilde A_n(V)$-modules,
with $S$ and $T$ simple. Suppose that $S$ does not factor through
$\widetilde A_{n-1}(V)$. Then $E$ is indecomposable, and every nonzero
$\widetilde A_n(V)$-submodule of $E$ does not factor through
$\widetilde A_{n-1}(V)$. Moreover, $L_n(E)$ is graded indecomposable.
The natural homomorphism $L_n(\iota):L_n(S)\to L_n(E)$ is injective.
Set $Q_E=L_n(E)/L_n(\iota)(L_n(S))$. Its homogeneous subspace of
degree $n$ satisfies $Q_E(n)\cong T$ as a $\widetilde A_n(V)$-module;
in particular, $Q_E\neq0$. Furthermore, the short exact sequence
\[
 0\longrightarrow L_n(S)
 \xrightarrow{L_n(\iota)}
 L_n(E)\longrightarrow Q_E\longrightarrow0
\]
is nonsplit in the category of admissible graded $V$-modules and remains
a nonsplit short exact sequence of weak $V$-modules after forgetting
the grading. The quotient $Q_E$ is generated by its homogeneous
subspace of degree $n$.
\end{theorem}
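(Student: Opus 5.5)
I would begin with the statements about $E$ itself. Since $S$ and $T$ are simple and the sequence is nonsplit, every nonzero submodule $E'\subseteq E$ contains $\iota(S)$. Indeed, if $E'\cap\iota(S)=0$, then $\pi|_{E'}$ is injective with nonzero image in the simple module $T$, so it is an isomorphism onto $T$, and $E'$ would split the sequence. Hence $\iota(S)$ is the unique simple submodule of $E$, and $E$ is indecomposable: in a decomposition $E=E_1\oplus E_2$ with both summands nonzero, each summand would contain $\iota(S)$. Moreover, if a nonzero submodule $E'$ factored through $\widetilde A_{n-1}(V)$, its submodule $\iota(S)$ would too, contrary to the hypothesis. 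Thus $\Ann_E(I_n)=0$. Corollary~\ref{cor:indecomposable} then gives graded indecomposability of $L_n(E)$. Injectivity of $L_n(\iota)$ is the injectivity clause of Proposition~\ref{prop:induction-naturality}.

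Next I would identify $Q_E(n)$. The image $L_n(\iota)(L_n(S))$ is a graded submodule whose degree-$n$ part is $\iota(S)$, because $L_n(S)(n)=S$ and $L_n(\iota)$ is graded. The quotient is graded, so $Q_E(n)=E/\iota(S)\cong T$ as $\widetilde A_n(V)$-modules, since zero modes act compatibly. Generation of $Q_E$ by $Q_E(n)$ follows from $L_n(E)=U(\widehat V)E$ in \eqref{eq:standard-induced-quotient}.

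The nonsplitting is the main point. Suppose first that there is a graded $V$-module splitting $\sigma:Q_E\to L_n(E)$ of the projection. Its image is then a graded submodule complementary to $L_n(\iota)(L_n(S))$. Taking degree-$n$ parts gives an $\widetilde A_n(V)$-submodule $\sigma(Q_E)(n)$ with $E=\iota(S)\oplus\sigma(Q_E)(n)$, which contradicts the nonsplit hypothesis.

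For weak modules I expect the obstacle to be that an ungraded splitting need not preserve degrees. I would handle this with the $\Omega$ filtration instead of the grading. Suppose $\sigma:Q_E\to L_n(E)$ is a weak-module section. It maps $\Omega_r(Q_E)$ into $\Omega_r(L_n(E))$. By Theorem~\ref{thm:omega-quotient}, applied to $E$ with $\Ann_E(I_n)=0$, we have $\Omega_n(L_n(E))=\bigoplus_{d\le n}L_n(E)(d)$. Let $p_n$ be the projection onto degree $n$, and define $g=p_n\circ\sigma|_{Q_E(n)}:T\cong Q_E(n)\to E$. Then $g$ commutes with zero modes because $\sigma$ does and zero modes preserve degree, so $g$ is an $\widetilde A_n(V)$-homomorphism. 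For $x\in Q_E(n)$, the difference between $\sigma(x)$ and any homogeneous lift of $x$ in degree $n$ lies in $L_n(\iota)(L_n(S))$, which is graded. Taking degree-$n$ components therefore shows $\pi\circ g=\mathrm{id}_T$, and the sequence of $\widetilde A_n(V)$-modules splits, a contradiction. This projection argument works even without the $\Omega$ description: graded submodules are stable under taking components, and zero modes preserve degree. I would use that simpler version and mention the $\Omega$ description only as motivation.
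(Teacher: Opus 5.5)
Your proposal is correct and follows the paper's proof essentially step by step. It uses the same containment argument showing that every nonzero submodule of $E$ contains $\iota(S)$, the same appeal to Corollary~\ref{cor:indecomposable} and to restriction to degree $n$ (for injectivity of $L_n(\iota)$ and for $Q_E(n)\cong T$), and the same degree-projection idea to rule out an ungraded section. The differences are cosmetic: in the weak-module step you project $\sigma$ only onto degree $n$ and obtain a splitting $T\to E$ of the original sequence directly, whereas the paper projects every component to build a graded section $\sigma_0$ and then invokes the graded nonsplitting already proved.
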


\begin{proof}
Set $S_E=\iota(S)$. If a nonzero submodule $N\subseteq E$ satisfies
$N\cap S_E=0$, then simplicity of $T$ gives
$\ker(\pi|_N)=N\cap S_E=0$ and $0\neq\pi(N)=T$.
Hence $\pi|_N:N\to T$ is an isomorphism. Regarding its inverse as a
homomorphism from $T$ to $E$, we obtain
$\pi\circ(\pi|_N)^{-1}=(\pi|_N)(\pi|_N)^{-1}=\mathrm{id}_T$,
contrary to the nonsplitting of the original short exact sequence.
Simplicity of $S_E$ therefore gives $0\neq N\cap S_E=S_E$, so
$S_E\subseteq N$. If $E=E_1\oplus E_2$ with both summands nonzero,
then $0\neq S_E\subseteq E_1\cap E_2=0$, a contradiction.
Thus $E$ is indecomposable.

By hypothesis, $I_nS\ne0$. Since $I_nS$ is a submodule of the simple
module $S$, we have $I_nS=S$. For every nonzero submodule $N\subseteq E$,
it follows that
\[
 I_nN\supseteq I_nS_E=I_n\iota(S)=\iota(I_nS)=S_E\neq0.
\]
This proves that every nonzero submodule of $E$ does not factor through
$\widetilde A_{n-1}(V)$. Conversely, if $S$ factors through that
algebra, then $S_E$ itself is such a nonzero submodule. Thus, under the
present assumption of a nonsplit extension of simple modules, the
submodule condition is equivalent to $S$ not factoring through
$\widetilde A_{n-1}(V)$.

The indecomposability of $E$ and Corollary~\ref{cor:indecomposable}
give the graded indecomposability of $L_n(E)$.

Proposition~\ref{prop:induction-naturality} gives the graded homomorphism
$L_n(\iota)$, whose restriction to degree $n$ is the injective map
$\iota$. Thus $(\ker L_n(\iota))(n)=\ker\iota=0$.
Equation~\eqref{eq:induced-nonzero} gives $\ker L_n(\iota)=0$, so
$L_n(\iota)$ is injective.

Define $Q_E=L_n(E)/L_n(\iota)(L_n(S))$.
The induced homomorphism preserves the grading, so its image is a
graded $V$-submodule and $Q_E$ is an admissible graded $V$-module.
Let $q:L_n(E)\to Q_E$ be the quotient map.
The grading-preserving property of $L_n(\iota)$ and the canonical
$\widetilde A_n(V)$-module isomorphism between $L_n(E)(n)$ and $E$ give
\[
 \begin{aligned}
 \bigl(\operatorname{im}L_n(\iota)\bigr)(n)
 &=L_n(\iota)\bigl(L_n(S)(n)\bigr)=\iota(S),\\
 Q_E(n)
 &=\frac{L_n(E)(n)}
 {\bigl(\operatorname{im}L_n(\iota)\bigr)(n)}
 =\frac{E}{\iota(S)}\cong T.
 \end{aligned}
\]
The map $\pi:E\to T$ induces a $\widetilde A_n(V)$-module isomorphism
$E/\iota(S)\cong T$, giving a $\widetilde A_n(V)$-module isomorphism
$\theta:Q_E(n)\to T$ with $\theta q|_{E}=\pi$.
In particular, $Q_E\neq0$.

If the resulting short exact sequence splits in the graded category,
choose a graded $V$-module section $\sigma:Q_E\to L_n(E)$ such that
$q\sigma=\mathrm{id}_{Q_E}$. In degree $n$, the map
$s=(\sigma|_{Q_E(n)})\theta^{-1}:T\to E$ is a
$\widetilde A_n(V)$-module homomorphism, and
\[
 \pi s
 =\theta(q\sigma)|_{Q_E(n)}\theta^{-1}
 =\theta\,\mathrm{id}_{Q_E(n)}\theta^{-1}=\mathrm{id}_T.
\]
Thus $s$ splits the original short exact sequence, contrary to the
hypothesis. The induced short exact sequence is therefore nonsplit in
the category of graded modules. Since $q(E)=Q_E(n)$, generation gives
\[
 Q_E=q(L_n(E))=q\bigl(U(\widehat V)E\bigr)
 =U(\widehat V)q(E)=U(\widehat V)Q_E(n).
\]

Now suppose that there exists a weak module section $\sigma:Q_E\to L_n(E)$.
For $v\in Q_E(d)$, let $\sigma_0(v)$ be the component of $\sigma(v)$
of degree $d$, and extend linearly to $Q_E$. This defines a graded
linear map. Write $p_d^Q$ and $p_d^L$ for the degree projections on the
two modules, with projections onto negative degrees defined to be zero.
If a homogeneous mode $a_r$ has degree $k$, then
\[
 \begin{aligned}
 \sigma_0(a_rv)
 &=p_{d+k}^L\sigma(a_rv)
 =p_{d+k}^La_r\sigma(v)
 =a_rp_d^L\sigma(v)
 =a_r\sigma_0(v),\\
 q\sigma_0(v)&=qp_d^L\sigma(v)
 =p_d^Qq\sigma(v)=v.
 \end{aligned}
\]
Thus $\sigma_0$ is a graded section, contrary to the nonsplitting
proved above. Hence the short exact sequence remains nonsplit as a
sequence of weak modules after forgetting the grading.
\end{proof}

\end{document}